\newcommand{\rlog}[1]{{\log\frac{1}{ #1}}}
\newcommand{\logfrac}{{\log\frac{1}{r}}}
\newcommand{\ud}{\mathrm{d}}
\newcommand{\eqdef}{\stackrel{{\mathrm def}}{=}}
\newcommand{\dx}{\,\mathrm{d}x}
\newcommand{\sob}{{H^1_{0;r}(B)}}
\newcommand{\be}{\begin{equation}}
\newcommand{\ee}{\end{equation}}
\newcommand{\bea}{\begin{eqnarray}}
\newcommand{\eea}{\end{eqnarray}}
\newcommand{\bean}{\begin{eqnarray*}}
\newcommand{\eean}{\end{eqnarray*}}
\newcommand{\wlim}{\mbox{ w-lim }}
\newcommand{\R}{{\mathbb R}}
\newcommand{\N}{{\mathbb N}}
\newtheorem{theorem}{Theorem}[section]
\newtheorem{lemma}[theorem]{Lemma}
\newtheorem{definition}[theorem]{Definition}
\newtheorem{remark}[theorem]{Remark}
\newtheorem{proposition}[theorem]{Proposition}
\begin{document}
 \title{Concentration profiles for the Trudinger-Moser functional
are shaped like toy pyramids\thanks{Research supported by Swedish Research Council and Wenner-Gren Foundations.}}
\author{David Costa\\
 {\small Department of Mathematical Sciences}\\
 {\small University of Nevada Las Vegas}\\
 {\small Las Vegas, NV 89154-4020}\\
{\small  USA}\\
{\small costa@unlv.nevada.edu}\\\and
\and Cyril Tintarev\thanks{Research done in part at Centre for Applicable Mathematics of TIFR}
\\{\small Department of Mathematics}\\{\small Uppsala University}\\
{\small P.O.Box 480}\\
{\small SE-751 06 Uppsala, Sweden}\\{\small
tintarev@math.uu.se}}
\date{}
\maketitle
\begin{abstract} This paper answers the conjecture of Adimurthi and
Struwe \cite{AdiStr}, that the semilinear Trudinger-Moser
functional
\begin{equation}
\label{J}
J(u)=\frac12\int_\Omega |\nabla u|^2\ud x-\frac{1}{8\pi}\int_\Omega (e^{4\pi u^2}-1)\ud x,
\end{equation}
(as well as functionals with more general critical
nonlinearities) satisfies the Palais-Smale
condition at all levels
except $\frac{n}{2}$, $n\in N$.
In this paper we construct critical sequences at any
level $c>\frac12$ corresponding to a large family of distinct concentration
profiles, indexed by all closed subsets $C$ of $(0,1)$, that
arise in the two-dimensional case instead of the ``standard bubble'' in higher
dimensions. The paper uses the notion of concentration of \cite{AOT, AT}
developed in the spirit of Solimini \cite{Solimini} and of \cite{acc}.
\par
\noindent  2000  \! {\em Mathematics  Subject  Classification.}
Primary  \! 35J20, 35J35, 35J60; Secondary 46E35, 47J30, 58E05. \\
 \noindent {\em Keywords.} Trudinger-Moser inequality, elliptic problems in two dimensions, concentration compactness, global compactness,
 profile decomposition, weak convergence, blowups, Palais-Smale sequences.
\end{abstract}

\section{Introduction}
Quasilinear elliptic problems in the Sobolev space $W^{1,p}$  with $p=N$ in
dimension $N$ are, in many respects, different from the case of Sobolev
spaces with $N>p$. The counterpart of Sobolev imbeddings in this case is the
Trudinger-Moser inequality (Yudovich, Peetre, Pohozhaev, Trudinger and Moser
\cite{Yudovich,Peetre, Pohozhaev, Trudinger,Moser}). Analysis of problems
involving the corresponding nonlinearity $e^{b |u|^{N'}}$, where
$N'=\frac{N}{N-1}$, $b>0$, often finds no counterpart in properties of the
critical Sobolev nonlinearity $|u|^\frac{pN}{N-p}$ when $N>p$.

This paper deals with properties of critical sequences for semilinear elliptic problem associated with the Trudinger-Moser functional in dimension $2$. Adimurthi and Struwe  proposed in \cite{AdiStr} that every Palais-Smale sequence for the semilinear functional \eqref{J} (and more generally, for similar functionals with nonlinearity of critical growth in the sense of Adimurthi (\cite{Adi-Pisa}), has a convergent subsequence except at the levels $J=\frac{n}{2}$, $n\in\N$.
Bounded critical sequences divergent at these levels were constructed by
Adimurthi and Prashanth \cite{AP}. Similarly to the higher dimensions case with
the critical nonlinearity (that lacks weak continuity), Palais-Smale sequences
may diverge due to concentration phenomena. In the higher-dimensional case,
critical sequences for semilinear elliptic problems with critical nonlinearities
are structured as finite sums of linear blowups of a solution of the asymptotic
equation. The latter is, in many cases, the unique (up to dilations and
translations) positive solution of $-\Delta u=u^\frac{N+2}{N-2}$ in $\R^N$,
known as {\em standard bubble}, {\em instanton} or {\em
(Bliss-)Talenti solution}. Analogous blowup analysis of solution sequences for
$N=2$ yields a counterpart of the standard bubble, but for general critical
sequences the pattern of concentration remained until recently unclear.
Adimurthi-Struwe conjecture was motivated by the Adimurthi's proof of
compactness of critical sequences below the level $1/2$ \cite{Adi-Pisa},
existence of a concentrating Palais-Smale sequence at the level $1/2$ (and thus,
by adding translated sequences with distinct concentration points, at the levels
$n/2$,  $n\in\N$), due to Adimurthi and Prashanth \cite{AP}, and the Druet's
(\cite{Druet}) analysis of blowup for sequences of exact solutions.

Linear blowups (rescalings) do not exhaust all relevant concentration in
the Trudinger-Moser case, that is, no subtraction of blowup terms from a
general sequence will yield a convergent remainder, as it is the case in the
higher dimensions (Solimini \cite{Solimini}, see also Gerard \cite{Gerard} and
Jaffard \cite{Jaffard}). In particular, the  critical
sequence $u_k$ in the Trudinger-Moser setting, given by Adimurthi and
Prashanth \cite{AP} (Theorem A), which concentrates in the sense that $|\nabla
u_k|^2$ converges weakly to the Dirac delta-function, shows no nontrivial weak
limits under linear deflations. For the case of unit disk
the sequence in \cite{AP} is $\lambda_k\mu_{t_k}(r)$, where the functions
$\mu_t$, $t>0$, are the celebrated
Moser functions (see \cite{Moser})
\begin{equation}
\label{Moser}
 \mu_t(r)=\frac{\min\{\rlog{t},\rlog{r}\}}{\sqrt{2\pi\rlog{t}}},
\end{equation}
$t_k\to +0$, and $\lambda_k\in\R$ is a particular sequence convergent to  $1$.
Euclidean deflations of this sequence yield in the
weak limit only constant functions on $\R^2$, which all
represent the zero element of $\mathcal D^{1,2}(\R^2)$, the completion of
$C_0^\infty(\R^2)$ in the gradient norm. (In fact, deflations of functions
\eqref{Moser} are usually invoked to illustrate that  $\mathcal D^{1,2}(\R^2)$
is not a function space because its zero element spans $1$).

Concentration (defined by emergence of a singular part in the weak*
limit of the sequence of measures $|\nabla
u_k|^2\ud x$) cannot be reduced to linearly rescaled profiles in other
applications as well. For example, in subelliptic problems on stratified
nilpotent Lie groups concentration occurs via anisotropic blowups
(\cite{STLie}). It should be noted, however, that once one does not restrict the
class of operators responsible for formation of profile to linear blowups, all
concentration defined in terms of singular limits of Lagrangean densities can be
expressed in terms of a profile decomposition (\cite{acc}).

In the radial Trudinger-Moser  case \cite{AOT}, nonzero profiles occur under
inhomogeneous blowups
\begin{equation}
\label{gs}
\delta_su(r)=s^{-1/2}u(r^s),\,s>0, u\in H^1_{0;r}(B)
\end{equation}
(by $B$ we denote an open unit disk, we will use the notation $B_R$ for an open
disk of radius $R$ and also indicate its center if it is different from the
origin).
The operators $\delta_s$, $s>0$, form a unitary (multiplicative) group $G$ acting 
on $H^1_{0;r}(B)$, and most
remarkably, the set of Moser functions is invariant under $G$. It is shown in \cite{AOT},
following \cite{acc} and in the spirit of \cite{Solimini}, that any bounded
sequence in $H^1_{0;r}(B)$ has a subsequence convergent in $\exp L^2$-norm (the
Orlicz norm of the Trudinger-Moser functional) once one subtracts from it
concentrating terms of the form $\delta_s w$. In particular, the
Adimurthi-Prashanth sequence can be represented as an inhomogeneous blowup
sequence  of a Moser function, $\lambda_k\delta_{s_k}\mu_{1/e}$ with $s_k\to 0$.

In \cite{AT} the concentration analysis of \cite{AOT} is extended to the
non-radial case
(the main results of \cite{AOT} were later reproduced, with an independent
proof, by \cite{BMM}). Remarkably, the inhomogeneous deflations $u(z)\mapsto
n^{-1/2}u(z^n)$ produce radial concentration profiles even for nonradial
sequences. The present paper is based on the profile decomposition of \cite{AT}
and uses it to derive the appropriate asymptotic equations and the
correspondent profiles. It shows than that these profiles can indeed occur in
critical sequences.

The results of this paper are as follows. We define the family of functions
(``Moser-Carleson-Chang towers'') that appear as concentration profiles and
present some of their properties in Section 2.
In Section 3 we establish properties of all concentration profiles that may occur in  critical sequences
(Theorem~\ref{lem:toy_pyramid}). These profiles, denoted as $\mu_{C_+,C_-}$,
are radial functions equal to $\pm\sqrt{\frac{1}{2\pi}\rlog{r}}$ on closed sets
$C_\pm\subset (0,1)$, and are harmonic on $(0,1)\setminus C$, $C=C_+\cup C_-$.
In particular, when $C_+=\{t\}$, $C_- = \emptyset$, the function  $\mu_{C_+,C_-}$
is the Moser function $\mu_t$.
In Section 4 we show that the Adimurthi-Struwe conjecture is false, namely, that for every level $c> \frac12$
there is an infinite collection of closed sets $C\subset (0,1)$ such that for
every such set $C$, there is a critical sequence of the form
$u_k=\delta_{1/s_k}\mu_C+\psi_k$, $s_k\to 0$, $\|\nabla\psi_k\|_2\to 0$ with
$J(u_k)\to c$. A comparable task would be elementary in the higher-dimensional
case, where critical sequences remain critical under perturbations vanishing
in the Sobolev norm, but this is not the case here (or in \cite{AP} as
well). Given that the argument in the general case would overtask even a most 
patient and motivated reader, we focused here on existence of critical
sequences with the concentration profile $\mu_{C_+,C_-}$ only in the cases
when $C_=\emptyset$ and $C_+$ is either a (possibly uncountable) set of measure
zero (Theorem~\ref{thm:critseq}) or an interval (Theorem~\ref{badlevelsC}). The
argument presented there, together with the case of a finite $C$ in
Theorem~\ref{badlevels}, contains all technical points needed for the general
case. Each of these cases suffices for the negative answer to the
Adimurthi-Struwe conjecture.

We conclude the paper with Theorem~ \ref{thm:asymp} that contains a general profile decomposition for critical sequences of the Trudinger-Moser functional.

\section{Definitions and assumptions}
\begin{definition}
\label{Mosertower}
{\bf (Moser-Carleson-Chang tower functions)}
Let $C_+, C_-$ be closed subsets of $(0,1)$, such that $C=C_+\cup
C_-\neq\emptyset$,  let $A=(0,1)\setminus C$,  and let
$\mathcal A=\{(a_n,b_n)\}_n$ be an enumeration of all connected components of $A$ starting with $a_1=0$.
A continuous radial function $\mu_{C_+,C_-}\in H^1_{0}(B)$ is called a  {\em Moser-Carleson-Chang tower} if
\begin{equation}
 \label{w}
\mu_{C_+,C_-}(r)=\begin{cases}
            \sqrt{\frac{1}{2\pi}\rlog{r}}, & r\in C_+,\\
 -\sqrt{\frac{1}{2\pi}\rlog{r}}, & r\in C_-,\\
 A_n +B_n\rlog{r}, & r\in(a_n,b_n),  A_n,B_n\in\R.
           \end{cases}
\end{equation}
If $C_-=\emptyset$, we will use the notation $\mu_C$ instead.
\end{definition}

When the set $C_+$ consists of a single point $t\in(0,1)$ and $C_-=\emptyset$,
the function $\mu_{C}$ is the original
Moser function \eqref{Moser}. When $C\subset (0,1)$ is a closed interval,
a function of the form $\mu_C$ was found
in the proof of existence of extremals for the  Trudinger-Moser functional
by Carleson and Chang, \cite{CarlesonChang}, p. 121, written in the
variable $t=\rlog{r}$).
Let us prove some elementary properties of Moser-Carleson-Chang towers.
\begin{proposition}
 \label{lem:LM}
 \par
 \begin{description}
\item[(i)] The coefficients $A_n$, $B_n$ are defined uniquely by continuity at $a_n, b_n\in C$. In particular, if $C=C_+$,
\begin{equation}
\label{LM}
\begin{split}
A_n= \frac{1}{\sqrt{2\pi}}\frac{\sqrt{\rlog{a_n}}\sqrt{\rlog{b_n}}}{\sqrt{\rlog{a_n}}+\sqrt{\rlog{b_n}}},\\
B_n=\frac{1}{\sqrt{2\pi}}\frac{1}{\sqrt{\rlog{a_n}}+\sqrt{\rlog{b_n}}}
\end{split}
\end{equation}
(when  $n=1$ the values  in \eqref{LM} are understood in the sense of the limits
as $a_1\to 0$,
i.e. $A_1=\sqrt{\frac{1}{2\pi}\rlog{b_1}}$ and $B_1=0$).
\item[(ii)] The function $\mu_{C_+,C_-}(r)$ has continuous derivative at every
point of $(0,1)$ except
$\{a_n, b_n\}_{(a_n, b_n)\in \mathcal A}$.
\item[(iii)] Let $\mathcal A'$ be the set of all intervals $(a,b)\in\mathcal A$ where $\mu_{C_+,C_-}$ does not change sign, and let
${\mathcal A}'' = {\mathcal A}\setminus {\mathcal A}^\prime$. Then
\begin{equation}
\label{sigmas}
\|\nabla \mu_{C_+,C_-}\|^2_2=\frac14\int_C\frac{\ud r}{r\rlog{r}}+\sum_{(a,b)\in\mathcal A'} \frac{\sqrt{\rlog{a}}-\sqrt{\rlog{b}}}{\sqrt{\rlog{a}}+\sqrt{\rlog{b}}}+\sum_{(a,b)\in\mathcal A^{\prime\prime}} \frac{\sqrt{\rlog{a}}+\sqrt{\rlog{b}}}{\sqrt{\rlog{a}}-\sqrt{\rlog{b}}}.
\end{equation}
\item[(iv)]The number of zeroes of $\mu_{C_+,C_-}$ on $(0,1)$ is less than the
value of $\|\nabla \mu_{C_+,C_-}\|_2^2-1$.
\item[(v)] For any choice of $C_-,C_+$, one has $\|\nabla \mu_{C_+,C_-}\|_2^2\ge
1$ and the equality holds only if $C$ consists of one point.
\end{description}
 \end{proposition}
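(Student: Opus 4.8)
The plan is to pass to the logarithmic variable $t=\rlog r$, in which the entire computation becomes one-dimensional. Writing $v(t)=\mu_{C_+,C_-}(e^{-t})$ for $t\in(0,\infty)$, membership $\mu_{C_+,C_-}\in H^1_0(B)$ forces $v(0)=0$, and the change of variables $t=\rlog r$ in the radial Dirichlet integral gives $\|\nabla\mu_{C_+,C_-}\|_2^2=2\pi\int_0^\infty v'(t)^2\,\ud t$. Denote by $C^*$ the image of $C$ under $r\mapsto\rlog r$; on $C^*$ one has $v(t)=\pm\frac{1}{\sqrt{2\pi}}\sqrt t$, while on each interval $(\beta,\alpha)$ that is the image of a component $(a,b)\in\mathcal A$ (so $\alpha=\rlog a>\beta=\rlog b\ge0$) the function $v$ is the affine interpolant of its two endpoint values. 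For (i) this makes the determination of $A_n,B_n$ a $2\times2$ linear system: matching $v(\alpha)=\frac{1}{\sqrt{2\pi}}\sqrt\alpha$ and $v(\beta)=\frac{1}{\sqrt{2\pi}}\sqrt\beta$ gives $B_n=\frac1{\sqrt{2\pi}}\frac{\sqrt\alpha-\sqrt\beta}{\alpha-\beta}=\frac1{\sqrt{2\pi}}\frac1{\sqrt\alpha+\sqrt\beta}$ and then $A_n=\frac1{\sqrt{2\pi}}\frac{\sqrt\alpha\sqrt\beta}{\sqrt\alpha+\sqrt\beta}$ after rationalizing; the case $n=1$ (where $a_1=0$, i.e.\ $\alpha=+\infty$) is read off as the limit, forcing $B_1=0$ and $v$ constant on the first component.

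For (iii) I would compute the energy piece by piece. On $C^*$ one has $v'(t)^2=\frac1{8\pi t}$, so that part contributes $2\pi\int_{C^*}\frac{\ud t}{8\pi t}=\frac14\int_C\frac{\ud r}{r\rlog r}$ after returning to $r$. On the image of a component with endpoint signs $\epsilon_a,\epsilon_b\in\{\pm1\}$, $v$ has constant slope $B=\frac1{\sqrt{2\pi}}\frac{\epsilon_a\sqrt\alpha-\epsilon_b\sqrt\beta}{\alpha-\beta}$, so its contribution is $2\pi B^2(\alpha-\beta)=\frac{(\epsilon_a\sqrt\alpha-\epsilon_b\sqrt\beta)^2}{\alpha-\beta}$; factoring $\alpha-\beta=(\sqrt\alpha-\sqrt\beta)(\sqrt\alpha+\sqrt\beta)$ turns this into $\frac{\sqrt\alpha-\sqrt\beta}{\sqrt\alpha+\sqrt\beta}$ when $\epsilon_a=\epsilon_b$ (the $\mathcal A'$ terms) and into $\frac{\sqrt\alpha+\sqrt\beta}{\sqrt\alpha-\sqrt\beta}$ when $\epsilon_a\neq\epsilon_b$ (the $\mathcal A''$ terms). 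Summing yields \eqref{sigmas}, with the understanding that the first component ($a_1=0$) is the degenerate case $B_1=0$ contributing $0$, and so must be taken with that convention rather than by naively evaluating the $\mathcal A'$ formula at $\rlog{a_1}=\infty$.

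For (ii), smoothness is clear on each open component (there $v$ is affine in $t$) and at interior points of $C_+$ or $C_-$ (there $v$ coincides with $\pm\frac1{\sqrt{2\pi}}\sqrt t$ on a neighborhood, since $C_+$ and $C_-$ are disjoint closed sets, so each point of one has a neighborhood missing the other). The delicate case, which I expect to be the main obstacle, is a point $t_0\in C^*$ that is an accumulation point of component-images but is not itself an endpoint $a_n$ or $b_n$; here $v$ is neither locally affine nor locally equal to $\pm\frac1{\sqrt{2\pi}}\sqrt t$. The plan is to write $g(t)=\frac1{\sqrt{2\pi}}\sqrt t$ (smooth near $t_0>0$) and use the interpolation error bound $|v(t)-g(t)|=\tfrac12|g''(\eta)|(t-c)(d-t)$ on each nearby component $(c,d)$. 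Since $t_0\notin(c,d)$ one checks $(t-c)(d-t)\le(d-c)\,|t-t_0|$, whence $\bigl|\frac{v(t)-g(t)}{t-t_0}\bigr|\le\tfrac12\sup|g''|\,(d-c)\to0$ as $t\to t_0$; combined with $v=g$ on $C^*$ near $t_0$ this shows $v$ is differentiable at $t_0$ with $v'(t_0)=g'(t_0)$, and the same estimate applied to the slopes ($v'=g'(\xi)$, $\xi\in(c,d)$) gives continuity of $v'$ at $t_0$. Thus $\mu_{C_+,C_-}\in C^1$ off $\{a_n,b_n\}$, where in general the one-sided derivatives (the affine slope $B_n$ versus $\pm g'$) disagree.

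Finally, (v) follows from one application of Cauchy--Schwarz: for any point $r_0\in C$, set $t_0=\rlog{r_0}$, so $v(t_0)=\pm\frac1{\sqrt{2\pi}}\sqrt{t_0}$ and $v(0)=0$, and
\[
\frac{t_0}{2\pi}=v(t_0)^2=\Bigl(\int_0^{t_0}v'\,\ud t\Bigr)^2\le t_0\int_0^{t_0}v'^2\,\ud t\le\frac{t_0}{2\pi}\|\nabla\mu_{C_+,C_-}\|_2^2,
\]
so $\|\nabla\mu_{C_+,C_-}\|_2^2\ge1$. Equality forces $v'$ constant on $(0,t_0)$ and $v'\equiv0$ on $(t_0,\infty)$; since $v=\pm g$ is non-constant on $C^*$, this is possible only if $C^*=\{t_0\}$, i.e.\ $C$ is one point, in which case $\mu_{C_+,C_-}$ is a Moser function of unit energy. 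For (iv), the zeros of $\mu_{C_+,C_-}$ in $(0,1)$ occur exactly one per sign-changing component, so their number equals $|\mathcal A''|$. Using \eqref{sigmas}, the component adjacent to $r=1$ lies in $\mathcal A'$ and, having $\beta=\rlog 1=0$, contributes exactly $\frac{\sqrt\alpha}{\sqrt\alpha}=1$, while each $\mathcal A''$ term satisfies $\frac{\sqrt\alpha+\sqrt\beta}{\sqrt\alpha-\sqrt\beta}>1$; since all remaining terms are nonnegative and these components are distinct, $\|\nabla\mu_{C_+,C_-}\|_2^2>1+|\mathcal A''|$, which is the claimed bound (with equality degenerating only in the zero-free single-point case, where the estimate of (v) already supplies the $+1$).
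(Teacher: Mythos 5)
Your proposal is correct, and it reproduces the paper's computations faithfully while diverging from the paper's proof in two instructive places. The core is the same: the paper also reduces everything to the matching conditions $A_n+B_n\rlog{a_n}=\sqrt{\frac{1}{2\pi}\rlog{a_n}}$, $A_n+B_n\rlog{b_n}=\sqrt{\frac{1}{2\pi}\rlog{b_n}}$ for (i) (deriving $B_1=0$ from finiteness of the Dirichlet integral, which is the honest form of your ``read off as the limit''), and proves (iii) by splitting $\|\nabla\mu_{C_+,C_-}\|_2^2$ into the integral over $C$ plus the sum over components; your logarithmic substitution $t=\rlog{r}$ just makes the piecewise-affine structure explicit, and your endpoint-sign bookkeeping $\epsilon_a,\epsilon_b$ recovers exactly the $\mathcal A'$/$\mathcal A''$ dichotomy of \eqref{sigmas}. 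Where you genuinely differ: for (ii) the paper only observes that if $a_{n_k}\to c$ then $b_{n_k}\to c$ and asserts by ``elementary computation'' that the one-sided slopes converge to $\bigl(\sqrt{\frac{1}{2\pi}\rlog{r}}\bigr)'|_{r=c}$, whereas your interpolation-error bound $|v-g|\le\frac12\sup|g''|\,(t-c)(d-t)$ together with $(t-c)(d-t)\le(d-c)|t-t_0|$ is a complete quantitative version of that assertion and buys actual differentiability (not just convergence of endpoint slopes) at accumulation points; and for (v) the paper reads the inequality off \eqref{sigmas} (the terminal interval $(a,1)$ contributes exactly $1$, everything else is nonnegative, and equality forces $\mathcal A=\{(0,a),(a,1)\}$), whereas your Cauchy--Schwarz argument $v(t_0)^2\le t_0\int_0^{t_0}v'^2$ is self-contained, independent of (iii), and its equality analysis ($v$ linear on $(0,t_0)$, constant beyond) pins down $C^*=\{t_0\}$ just as cleanly. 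Two further points in your favor: you correctly identified that the first component $(0,b_1)$ must contribute $0$ to \eqref{sigmas} by convention rather than the naive limit value $1$ --- the paper only makes this convention explicit later (setting $\sigma_n=1$ when $a_n=0$ in the proof of Theorem~\ref{lem:toy_pyramid}); and you are right that the strict inequality in (iv) degenerates to the false statement $0<0$ when $C$ is a singleton, an edge case the paper's own one-line proof of (iv) also does not address, so flagging it is appropriate rather than a defect of your argument.
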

\begin{proof}
(i): Values \eqref{LM}  for $n\ge 2$ are the unique solutions of continuity conditions at $a_n$ and $b_n$, $A_n+B_n\rlog{a_n}=\sqrt{\frac{1}{2\pi}\rlog{a_n}}$ and
$A_n+B_n\rlog{b_n}=\sqrt{\frac{1}{2\pi}\rlog{b_n}}$. Since $\mu_{C_+,C_-}$ has
a finite Sobolev norm, we have, necessarily, $B_1=0$, which yields
$A_1=\sqrt{\frac{1}{2\pi}\rlog{b_1}}$.

(ii): For the sake of simplicity we consider the case $C=C_+$, the general case is similar.
If $(a_{n_k},b_{n_k})\subset\mathcal A$ and $a_{n_k}\to c$ for some $c$, then necessarily $b_{n_k}\to c$, from which, by elementary computation, follows
$\mu_C'(c)=\lim\mu_C'(a_{n_k})=\lim\mu_C'(b_{n_k})=\left(\sqrt{\frac{1}{2\pi}\rlog{r}}\right)'|_{r=c}$.
Consequently, since $\mu_C$  by definition is smooth at all internal points of $A$ and of $C$,
the only points in $(0,1)$ where $\mu_C'$ is discontinuous are the points $a_n$ and $b_n$.

(iii) follows from the direct computation of the right hand side in
\[
\|\nabla \mu_{C_+,C_-}\|_2^2=2\pi\int_{C_+,C_-}|\mu'_{C_+,C_-}(r)|^2r\ud
r+2\pi\sum_n \int_{a_n}^{b_n}|\mu_{C_+,C_-}'(r)|^2r\ud r.
\]

(iv): The terms in \eqref{sigmas} corresponding to the set $\mathcal A^{\prime\prime}$ are greater than $1$.
Furthermore, on the interval $(a,1)\in\mathcal A$, one has necessarily $\mu_{C_+,C_-}(r)=\pm\frac{\rlog{r}}{\sqrt{2\pi\rlog{a}}}$ and
the contribution of this interval to  \eqref{sigmas} is
\[
\int_{r\in(a,1)}|\nabla \mu_{C_+,C_-}|^2=1.
\]
(v): By the last observation the sum in \eqref{sigmas} is greater or than $1$,
and the equality occurs only if the sum consists only of this term, which
corresponds to $\mathcal A=\{(0,a),(a,1)\}$.
\end{proof}

We consider in this paper critical nonlinearities following the definition from \cite{Adi-Pisa}. Without loss of generality we restrict the consideration to the factor $b$ in the exponent equal to $4\pi$, since the general case can be always recovered by replacing the variable $u$ with a scalar multiple. Let $f\in C(\R)$ and $F(s)=\int_0^sf(t)\mathrm{d}t$.

\begin{definition} We say that a continuous function $f:\R\to\R$ is of the $4\pi$-critical growth, if
$f(s)=g(s)e^{4\pi s^2}$ and for any $\delta>0$,
\[
\lim_{|s|\to\infty }g(s)e^{-\delta s^2}=0.
\]
\end{definition}
We will study the functional

\begin{equation}
\label{Jv}
J(u)=\frac12\int_\Omega |\nabla u|^2\ud x-\frac{1}{8\pi}\int_\Omega F(u)\ud x, \;u\in H_0^1(\Omega),
\end{equation}
where $\Omega$ is a bounded domain in $\R^2$.

We write $g(t)=\frac{1}{8\pi}h'(t)+h(t)t$ so that
$\frac{1}{8\pi}f(t)=g(t)e^{4\pi t^2}$, and we will use the following assumptions:
\begin{description}
\item[(g0)] $\lim_{|s|\to\infty }\frac{g'(t)}{g(t)t}=0$;
\item[(g1)]  There is a $T>0$ such that $\inf_{t\ge T}g(t)>0$ and $\sup_{t\le -T}g(t)<0$;
\item[(g2)] $\lim_{|t|\to \infty}\frac{F(t)}{f(t)t}=0$.
\end{description}

\begin{remark}
Examples of $g(t)$ can be found in \cite{Adi-Pisa}. In particular, $g(t)=t$ is a typical example.
\end{remark}

\section{Blow-up profiles}
The profile decomposition in $H_0^1(\Omega)$ below is quoted from \cite{AT},
Theorem~2.5 combined with Theorem~2.6.
The notation $z^j$, $j\in\N$, $z\in\Omega$, is understood in the sense of the power of a complex number representing the point $z$.  Without loss of generality we assume that $\Omega\subset B_\frac12$.
Functions are in $H_0^1(\Omega)$ are considered also as elements of  $H_0^1(B)$, via extension by zero.
\begin{theorem}
\label{2cc-j} Let $\Omega\subset\R^2$ be a bounded domain and let
$u_k$ be a bounded sequence  in $H^{1}_{0}(\Omega)$. There
exist $j_k^{(n)}\in \N$, with $j_k^{(1)}=1$,  and
$z_k^{(n)}\in \bar \Omega$, with $z_k^{(1)}=0$ and
$\lim_{k\to\infty} z_k^{(n)}= z_n\in\bar\Omega$, $k \in\N$, $n\in\N$,
such that  for a renumbered subsequence,
\begin{eqnarray}
\label{w_n=j} &&w^{(n)}(|z|)=\wlim \left({j_k^{(n)}}\right)^{-1/2}u_k(z_k^{(n)}+z^{j_k^{(n)}}),
\\
\label{separates=j} &&    z_m\neq z_n \mbox{ or } |\log j_k^{(m)}-\log j_k^{(n)}|\to \infty  \mbox{ whenever } n \neq m,
\\
\label{norms=j} &&\sum_{n \in \N} \int_B|\nabla w ^{(n)}|^2\dx  \le \limsup \int_\Omega|\nabla u_k|^2\dx,
\\
\label{BBasymptotics=j} &&u_{k}  - \sum_{n\in\N}  {{j_k^{(n)}}}^{1/2}w^{(n)}(|z-z_n|^{1/j_k^{(n)}}) \to 0 \text { in }\exp L^2,
\end{eqnarray}
and the series  $\sum_{n\in\N}  {{j_k^{(n)}}}^{1/2}w^{(n)}(|z-z_n|^{1/j_k^{(n)}})$ converges in $H^1_{0}(B)$
uniformly in $k$.
\end{theorem}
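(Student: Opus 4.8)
The plan is to realize \eqref{w_n=j}--\eqref{BBasymptotics=j} as a concrete instance of the abstract profile decomposition of \cite{acc}, with the higher-dimensional dilation group replaced by the semigroup of inhomogeneous deflations native to the Trudinger--Moser setting. First I would fix $H=H^1_0(B)$ (with $\Omega\subset B_{1/2}$ and all functions extended by zero), and for $j\in\N$, $z_0\in\bar B_{1/2}$ introduce the deflation operators
\[
T_{j,z_0}u(z)=j^{-1/2}u(z_0+z^{j}),
\]
where $z^{j}$ is the $j$-th complex power (single-valuedness of the power map is exactly why $j$ must be an integer). The deflated sequences in \eqref{w_n=j} are precisely $T_{j_k^{(n)},z_k^{(n)}}u_k$, and the candidate profiles are their weak limits $w^{(n)}=\wlim T_{j_k^{(n)},z_k^{(n)}}u_k$. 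On the radial subspace these operators coincide with the unitary group $\delta_s$ of \eqref{gs} at $s=j$, so they preserve the Dirichlet energy there, while the reconstruction bubbles $(j_k^{(n)})^{1/2}w^{(n)}(|z-z_n|^{1/j_k^{(n)}})$ appearing in \eqref{BBasymptotics=j} are the corresponding inflations $\delta_{1/j_k^{(n)}}w^{(n)}$ recentered at the limit point $z_n$.

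The analytic core is a cocompactness lemma: the embedding $H^1_0(B)\hookrightarrow\exp L^2$ is cocompact relative to the family $\{T_{j,z_0}\}$. Precisely, if $u_k\rightharpoonup0$ in $H$ and $T_{j_k,z_k}u_k\rightharpoonup0$ for every sequence $(j_k,z_k)\in\N\times\bar B_{1/2}$ (the trivial gauge $j_k=1$, $z_k=0$ recovering $u_k\rightharpoonup0$), then $u_k\to0$ in the Orlicz norm $\exp L^2$ of the Trudinger--Moser functional. This is the result underlying \cite{AOT} (radial case) and \cite{AT} (general case); I would prove it through the substitution $t=\rlog{r}$, under which radial Moser functions become tent functions on $(0,\infty)$ and the deflations $\delta_s$ become the affine action $t\mapsto st$ of the half-line, turning the Orlicz estimate into a one-dimensional concentration analysis controlled by the sharp Moser--Trudinger inequality. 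The reduction from the full estimate to the radial one rests on the geometric fact that composition with $z\mapsto z^{j}$ for large $j$ smears the angular variable, so that only the angular average of $u_k$ survives passage to the weak limit; this is exactly why each genuinely concentrating profile $w^{(n)}$ in \eqref{w_n=j} is radial, a function of $|z|$ alone.

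Granting cocompactness, I would run the iterative extraction of \cite{acc}. The first profile is the ordinary weak limit $w^{(1)}=\wlim u_k$ (so $j_k^{(1)}=1$, $z_k^{(1)}=0$); any further nontrivial profile concentrating at a point must, by the trivial-gauge case above, have $j_k^{(n)}\to\infty$ and hence be radial. After subtracting the bubbles generated by $w^{(1)},\dots,w^{(n-1)}$, one selects gauges $(j_k^{(n)},z_k^{(n)})$ realizing up to a fixed fraction the largest weak limit that any deflation of the residual sequence can produce, and defines $w^{(n)}$ as that limit. Asymptotic orthogonality of distinct gauges translates into the separation condition \eqref{separates=j} (distinct concentration points, or deflation rates separating as $|\log j_k^{(m)}-\log j_k^{(n)}|\to\infty$), and the induced near-orthogonality of the bubbles in $H^1$ yields the Bessel-type inequality \eqref{norms=j}. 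Because the energies $\int_B|\nabla w^{(n)}|^2\dx$ are summable and dominated by $\limsup\int_\Omega|\nabla u_k|^2\dx$, the extraction exhausts the available energy: the series $\sum_n(j_k^{(n)})^{1/2}w^{(n)}(|z-z_n|^{1/j_k^{(n)}})$ converges in $H^1_0(B)$ uniformly in $k$, and the residual sequence is weakly null along every deflation by construction, so the cocompactness lemma forces it to zero in $\exp L^2$, which is \eqref{BBasymptotics=j}.

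I expect the cocompactness lemma to be the main obstacle. The Hilbertian orthogonality that drives the bookkeeping---verifying the semigroup structure of the $T_{j,z_0}$, matching the abstract orthogonality of \cite{acc} to \eqref{separates=j} and \eqref{norms=j}, and summing the bubble series uniformly in $k$---is essentially routine once the profiles are extracted. By contrast, cocompactness demands quantitative control of the $\exp L^2$ norm by the Dirichlet energy localized at the concentration scale, together with the angular-averaging mechanism that forces radiality; both are peculiar to the borderline exponent $p=N$ and have no analogue in the Sobolev comparison available when $N>p$.
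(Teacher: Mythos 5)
Your proposal takes essentially the same approach as the paper: the paper does not prove Theorem~\ref{2cc-j} itself but quotes it from \cite{AT} (Theorems~2.5 and~2.6), whose underlying argument is exactly the scheme you describe --- cocompactness of the embedding $H^1_0(B)\hookrightarrow \exp L^2$ relative to the inhomogeneous deflations $u\mapsto j^{-1/2}u(z_0+z^{j})$, iterative profile extraction in the spirit of \cite{acc} and \cite{Solimini}, radiality of the profiles from the angular smearing of $z\mapsto z^{j}$, and the separation and Bessel-type conditions \eqref{separates=j}, \eqref{norms=j} from asymptotic orthogonality of the gauges. Your identification of the cocompactness lemma as the analytic core, with the remaining bookkeeping routine, accurately reflects the structure of the cited proof.
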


We use the fact that $u_k$ is a critical sequence for \eqref{J} in order to make the expansion
\eqref{BBasymptotics=j} more specific, namely, to verify that every asymptotic profile \eqref{w_n=j} is a Moser-Carleson-Chang tower and that the expansion \eqref{BBasymptotics=j} has finitely many terms. This is stated at the end of the paper as Theorem~\ref{thm:asymp}.

\begin{theorem}
\label{lem:toy_pyramid} Assume that the function $f$ is of $4\pi$-critical growth and satisfies (g0) and (g1).
Let $u_k$ be a critical sequence of \eqref{Jv}. Then every concentration profile $w^{(n)}$, $n\ge 2$, given by \eqref{w_n=j},
equals a function $\mu_{C^{(n)}_+,C^{(n)_-}}$ with some disjoint closed sets $C^{(n)}_+,C^{(n)}_-\subset (0,1)$, as given by Definition~\ref{Mosertower}.
\end{theorem}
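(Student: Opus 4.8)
The plan is to fix $n\ge 2$ and analyze the limit profile $w=w^{(n)}$ directly from the asymptotic equation it must satisfy. The starting point is that $u_k$ is a critical sequence, i.e. $J'(u_k)\to 0$ in $H^{-1}(\Omega)$, so that for every test function $\varphi$ one has $\int_\Omega\nabla u_k\cdot\nabla\varphi\,\ud x-\tfrac{1}{8\pi}\int_\Omega f(u_k)\varphi\,\ud x\to 0$. First I would rewrite this equation in the ``blown-up'' coordinates adapted to the $n$-th profile, substituting the change of variables $z\mapsto z_k^{(n)}+z^{j_k^{(n)}}$ and the rescaling $(j_k^{(n)})^{-1/2}$ that defines $w^{(n)}$ in \eqref{w_n=j}. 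The Jacobian of the holomorphic map $z\mapsto z^{j}$ contributes a factor that, combined with the $1/2$-power normalization, should produce an \emph{invariant} weak formulation; because $w^{(n)}$ is obtained as a weak limit, passing to the limit in the linear (gradient) term is immediate, and the whole content is in identifying the weak limit of the nonlinear term $f(u_k)=g(u_k)e^{4\pi u_k^2}$ in these coordinates.

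The key structural step is to show that in the limit the nonlinearity concentrates on the ``critical set'' where $|w^{(n)}|$ attains the Moser ceiling $\sqrt{\tfrac{1}{2\pi}\rlog{r}}$, and that away from that set $w^{(n)}$ is harmonic (in the radial variable, i.e. $A+B\rlog{r}$). Here is where (g1) and the $4\pi$-critical growth enter: the exponential $e^{4\pi u_k^2}$ against the background Moser profile is subcritical (its contribution vanishes in the limit under the $\exp L^2$ control from \eqref{BBasymptotics=j}) \emph{unless} $|w^{(n)}(r)|=\sqrt{\tfrac{1}{2\pi}\rlog{r}}$, the borderline rate for the Trudinger--Moser exponential to be non-integrable. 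Thus on the open set $A=(0,1)\setminus C$ the limiting equation reduces to $\Delta w^{(n)}=0$ for radial functions, forcing the affine-in-$\rlog{r}$ form, i.e. the third line of \eqref{w}. The sign condition in (g1) — that $g(t)>0$ for $t\ge T$ and $g(t)<0$ for $t\le -T$ — is what guarantees the concentration set splits into $C_+$ (where $w^{(n)}$ is positive and hits $+\sqrt{\tfrac{1}{2\pi}\rlog{r}}$) and $C_-$ (where it hits $-\sqrt{\tfrac{1}{2\pi}\rlog{r}}$), matching the two signed cases in Definition~\ref{Mosertower}; (g0) is used to control the logarithmic derivative of $g$ so that $g$ does not distort the concentration rate.

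To make ``$|w^{(n)}|$ cannot exceed the Moser ceiling'' precise I would invoke the pointwise consequence of the radial Trudinger--Moser estimate: any radial $H^1_0(B)$ function satisfies $|u(r)|\le \sqrt{\tfrac{1}{2\pi}\rlog{r}}\,\|\nabla u\|_2$ (a radial lemma), and combine it with the norm bound \eqref{norms=j} from Theorem~\ref{2cc-j}. On the complementary closed set $C$, I would show $|w^{(n)}|$ must equal the ceiling on a \emph{closed} set by an upper-semicontinuity/maximality argument: where the ceiling is not attained, the nonlinear term is negligible and harmonicity extends, so $C_\pm=\{r:\,w^{(n)}(r)=\pm\sqrt{\tfrac{1}{2\pi}\rlog{r}}\}$ are closed and disjoint by continuity of $w^{(n)}$, with $C=C_+\cup C_-$ nonempty precisely because $n\ge 2$ forces genuine concentration (a nontrivial singular part), so $w^{(n)}\not\equiv 0$ is not merely affine on all of $(0,1)$.

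The main obstacle I anticipate is the limit passage in the nonlinear term: showing rigorously that $f(u_k)$ tested against rescaled functions converges to a measure supported on $C$, and that off $C$ it contributes nothing. This requires quantitative control of the exponential against the precise Moser rate — essentially a Carleson--Chang type estimate showing that the exponential integral stays bounded exactly at the critical ceiling and decays strictly below it — together with the uniform $\exp L^2$ convergence of the tail in \eqref{BBasymptotics=j} to ensure the profiles decouple and no interaction terms survive. Handling the borderline integrability, and ruling out that the limiting equation picks up a spurious lower-order term from $g$ (controlled by (g0)), is the delicate analytic core; the affine/harmonic structure on $A$ and the continuity matching at $\partial C$ are then comparatively routine, recovering exactly the form \eqref{w} of a Moser-Carleson-Chang tower.
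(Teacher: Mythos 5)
Your overall architecture is the same as the paper's: test $J'(u_k)$ against suitably rescaled test functions, pass to the weak limit of the nonlinear term, and conclude that $w=w^{(n)}$ is radially harmonic (i.e.\ of the form $A+B\rlog{r}$) off the set where $|w|$ touches $\sqrt{\frac{1}{2\pi}\rlog{r}}$. However, the mechanism you propose for the crucial ceiling bound $|w(r)|\le\sqrt{\frac{1}{2\pi}\rlog{r}}$ does not work. The radial lemma only gives $|w(r)|\le\|\nabla w\|_2\,\sqrt{\frac{1}{2\pi}\rlog{r}}$, and \eqref{norms=j} bounds $\sum_n\|\nabla w^{(n)}\|_2^2$ by $\limsup\|\nabla u_k\|_2^2$, which is in general much larger than $1$; indeed every tower has $\|\nabla\mu_{C_+,C_-}\|_2^2\ge 1$ (Proposition~\ref{lem:LM}(v)), and the theorem must cover profiles of arbitrarily large norm. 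The ceiling with constant $1$ is emphatically \emph{not} a property of profiles of general bounded sequences (consider $u_k=2\,\delta_{1/k}\mu_t$, whose profile $2\mu_t$ exceeds the ceiling at $r=t$); it is exactly where criticality enters. The paper's argument is: if $|w(a)|$ exceeded the ceiling at some $a$, then along the rescaled sequence $4\pi u_k(\rho^{j_k},\theta)^2-2j_k\rlog{\rho}$ stays bounded away from zero near $a$, and by \eqref{f1} (this is the actual use of (g0) and (g1)) the term $j_k^{3/2}\rho^{2j_k-2}\frac{1}{8\pi}f(u_k)$ diverges uniformly on an interval, contradicting its weak convergence to the distribution $-\Delta w$ in \eqref{2.4}. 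Your proposal gestures at this subcritical/critical dichotomy elsewhere, but the step you designate as the precise proof of the ceiling (radial lemma plus \eqref{norms=j}) is the one that fails, and it cannot be repaired by soft functional analysis.

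A second, genuine omission: to land in Definition~\ref{Mosertower} you must show that the critical set $C$ does not accumulate at $0$ or at $1$, so that $w$ is constant on a component $(0,b_1)$ (forcing $B_1=0$) and of the form $A+B\rlog{r}$ on a terminal interval $(a,1)$. This is not the ``comparatively routine'' continuity matching you defer to the end: the paper devotes a separate quantitative argument to it, using \eqref{sigmas} to show the ratios $\sigma_n$ satisfy $\prod_n\sigma_n\le Ce^{M\|\nabla w\|_2^2}<\infty$, which is incompatible with sequences of complementary intervals with $a_{n_k}\to 0$ or $b_{n_k}\to 1$; without this, the enumeration with $a_1=0$ in \eqref{w} is not justified. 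A minor point in the same vein: the splitting into disjoint closed sets $C_\pm$ follows simply from continuity of $w$ and positivity of the ceiling on $(0,1)$, not from the sign condition in (g1); (g1), together with (g0), is used only to establish \eqref{f1}. Finally, note that the paper's proof never needs the $\exp L^2$ remainder control \eqref{BBasymptotics=j} or any decoupling of profiles: the single rescaled weak limit \eqref{w_n=j} and the weak convergence \eqref{2.4} suffice.
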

\begin{proof}
Let us derive first the equation satisfied by the limit \eqref{w_n=j}. The index $n$ is fixed for the rest of the proof, and will be omitted.
Let $\varphi\in C_0^\infty(B)$ be a radial function and let $\psi\in C^1(S^1)$. Evaluating $J'(u_k)$ in the direction $v_k(z)= j_k^{1/2}\varphi(r^{1/j_k})\psi(\theta)$, written in the polar coordinates $z=z_k+re^{i\theta}$,
we get $\langle J'(u_k),v_k\rangle$ in the following form:
and setting $r^{1/j_k}=\rho$, we arrive at
\begin{equation}
\label{2.2}
\begin{split}
\int_0^1\int_0^{2\pi} w'(\rho)\varphi'(\rho)\psi(\theta)\rho\ud\rho\ud\theta\, -
\\
- \int_0^1\int_0^{2\pi} j_k^{3/2} \rho^{2j_k-2}
\frac{1}{8\pi}f(u_k(\rho^{j_k},\theta)) \varphi(\rho)\psi(\theta) \rho\ud\rho\ud\theta\to 0.
\end{split}
\end{equation}
This implies that
\begin{equation}
\label{2.4}
j_k^{3/2} \rho^{2j_k-2}
\frac{1}{8\pi}f(u_k(\rho^{j_k},\theta))
\rightharpoonup -\Delta w \text{ in } H^{-1}(B).
\end{equation}
Recall that $\frac{1}{8\pi}f(s)=g(s)e^{4\pi s^2}$. It easily follows from (g0) and (g1) that
\begin{equation}
 \label{f1}
\lim_{|s|\to\infty}\frac{\log g(s)}{s^2}=0.
\end{equation}
Note that $|w(r)|\le \sqrt{\frac{1}{2\pi}\logfrac}$ for all $r\in(0,1]$. Indeed, if for some $a\in (0,1]$ a converse inequality is true, then, for all $k$ sufficiently large, $4\pi u_k(\rho^{j_k},\theta)^2-2j_k\rlog{\rho}$ will be bounded away from zero when $r$ is in some neighborhood of $a$ and $\theta\in S^1$, and thus, taking into account \eqref{f1}, we have the left hand side in \eqref{2.4} uniformly convergent to $\infty$ on an interval. Taking a positive test function supported on an interval, we arrive at a contradiction, since  $-\Delta w$ is a distribution.

Let $C_1=\left\lbrace r\in(0,1]: |w(r)|=\sqrt{\frac{1}{2\pi}\logfrac}\right\rbrace$. Since $w$ is continuous on $(0,1]$, the set $C_1$ is relatively closed in $(0,1]$. Since $w\in\sob$ and $\sqrt{\frac{1}{2\pi}\logfrac}\notin\sob$, $C_1\neq (0,1]$.
Thus the complement of $C_1$ in $(0,1]$ is an at most countable union of open  intervals of $\R^N$. Let $\mathcal A$ be an enumeration of all such intervals. If $(a,b)\in \mathcal A$, then
$w(a)=\pm\sqrt{\frac{1}{2\pi}\log\frac{1}{a}}$, $w(b)=\pm\sqrt{\frac{1}{2\pi}\log\frac{1}{b}}$
and $|w(r)|<\sqrt{\frac{1}{2\pi}\logfrac}$ for $r\in(a,b)$. From \eqref{2.4} it follows that $w$ is harmonic on $(a,b)$, and, as a radial function, has the form $A+B\rlog{r}$, $A,B\in\R$, and the values of $A$ and $B$ are uniquely defined by the values $w(a)$ and $w(b)$.
Let $C=C_1\setminus \{1\}$. It remains to show that $w$ is constant in a
neighborhood of zero and is harmonic in a neighborhood of $1$.
Assume first that $\mathcal A$ is infinite, and thus countable in this case. 
Then, by elementary calculations already mentioned in the proof of Proposition~\ref{lem:LM}, we have that $w$ satisfies \eqref{sigmas}, which in turn shows that the set $\mathcal A^{\prime\prime}$ of intervals in $\mathcal A$, where the function $w$ changes sign, is finite.
Setting $\sigma_n=1$ when $a_n =0$, $\sigma_n=+\infty$ when $b_n=1$ and
$$
\sigma_n\eqdef\dfrac{\sqrt{\rlog{a_n}}}{\sqrt{\rlog{b_n}}} \text{ otherwise,}
$$
we have
\begin{equation}
 \label{eq:not2}
\|\nabla w\|_2^2\ge \sum_{(a_n,b_n)\in \mathcal A'}\frac{\sigma_n-1}{\sigma_n+1}.
\end{equation}

Note that the sequence $\sigma_n$ is bounded,  otherwise the sum above would have infinitely many terms greater than $1/2$, say $\sigma_n\le M-1$, $M>0$. Then from the relation above it is immediate
that $\sigma_n\to 1$, and
$$
\prod_n \sigma_n\le C \prod_{(a_n,b_n)\in \mathcal A'} \sigma_n\le C e^{\sum_n(\sigma_n-1)}\le C e^{M\sum_n\frac{\sigma_n-1}{\sigma_n+1}}\le
C e^{M\|\nabla w\|_2^2}= \hat{C} <\infty.
$$
Let $\nu$ be any finite subset of $\mathbb Z$ such that $(a_n, b_n)_{n\in \nu}$ are ordered by $n$ and none of $a_n$ is zero. Then
\[
\dfrac{\max_{n\in\nu} \sqrt{\rlog{a_n}}}{\min_{n\in\nu} \sqrt{\rlog{b_n}}}
\le \prod_{n\in\nu} \sigma_n\le \prod_{n\in\mathbb Z} \sigma_n\le \hat{C},
\]
from which one immediately concludes that there are no sequences $(a_{n_k},b_{n_k})\in \mathcal A$ such that $a_n>0$ and
$a_{n_k}\to 0$ or $b_{n_k}\to 1$.

If the set $\mathcal A$ is finite, this is obviously is the case as well. Thus there exist an $\epsilon>0$ such that
the function $w$ is harmonic on the whole interval $(0,\epsilon)$ resp.
$(\epsilon,1)$, unless it equals $\pm\sqrt{\frac{1}{2\pi}\logfrac}$
on this interval. The latter, however, cannot occur since this contradicts
$w\in H_0^1(B)$.
\end{proof}

\section{Critical sequences: the case of finite $C$}
In this and the next section we assume that $\Omega$ is the open unit disk $B$.
\begin{lemma}
\label{norm-of-tower}
Let $0<a_1<\dots<a_n<1$, $n\in\N$, and let $C_+=\{a_1,\dots,a_n\}$, $C_-=\emptyset$. Then
\[
\begin{cases}
\|\nabla \mu_C\|_2^2=1, &  n=1\\
1<\|\nabla \mu_C\|_2^2<n, & n=2,3,\dots
\end{cases}
\]
Furthermore, for any $t\in(1,n)$ there exist $a_1,\dots,a_n $, $0<a_1<\dots<a_n<1$ such that  $\|\nabla \mu_C\|_2^2=t$.
\end{lemma}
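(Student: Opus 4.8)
The plan is to reduce the computation of $\|\nabla\mu_C\|_2^2$ to the explicit sum furnished by Proposition~\ref{lem:LM}(iii) and then to analyze that sum as a function of the points $a_1,\dots,a_n$. Since $C_-=\emptyset$, the function $\mu_C$ is everywhere nonnegative and changes no sign, so $\mathcal A''=\emptyset$ and only the sum over $\mathcal A'=\mathcal A$ survives; moreover $C$ is finite, hence of measure zero, so the integral term $\frac14\int_C\frac{\ud r}{r\rlog{r}}$ vanishes. The connected components of $A=(0,1)\setminus C$ are $(0,a_1),(a_1,a_2),\dots,(a_{n-1},a_n),(a_n,1)$. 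By Proposition~\ref{lem:LM}(i) one has $B_1=0$ on $(0,a_1)$, so $\mu_C$ is constant there and that interval contributes nothing, while the terminal interval $(a_n,1)$ contributes exactly $1$ (the computation recorded in the proof of Proposition~\ref{lem:LM}(iv)). Writing $s_i=\sqrt{\rlog{a_i}}$, so that $s_1>s_2>\dots>s_n>0$, the interior intervals contribute the ratios in \eqref{sigmas}, giving
\begin{equation}
\label{eq:normsum}
\|\nabla\mu_C\|_2^2=1+\sum_{i=1}^{n-1}\frac{s_i-s_{i+1}}{s_i+s_{i+1}}.
\end{equation}

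With \eqref{eq:normsum} in hand the bounds are immediate. For $n=1$ the sum is empty, so the norm equals $1$. For $n\ge 2$, each summand $\frac{s_i-s_{i+1}}{s_i+s_{i+1}}$ lies strictly in $(0,1)$, since $s_i>s_{i+1}>0$ forces the numerator to be positive and strictly smaller than the denominator; summing $n-1$ such terms and adding $1$ yields $\|\nabla\mu_C\|_2^2\in(1,n)$.

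For the surjectivity claim I would exhibit explicit configurations. Given $t\in(1,n)$, set $x=\frac{t-1}{n-1}\in(0,1)$ and $\rho=\frac{1-x}{1+x}\in(0,1)$; choosing $s_i=\rho^{\,i-1}$, equivalently $a_i=e^{-\rho^{2(i-1)}}$, makes every summand in \eqref{eq:normsum} equal to $x$, so that $\|\nabla\mu_C\|_2^2=1+(n-1)x=t$. The required ordering $0<a_1<\dots<a_n<1$ holds because $\rho\in(0,1)$ gives $s_1>\dots>s_n>0$, and $a_i=e^{-s_i^2}\in(0,1)$. Alternatively, since the right-hand side of \eqref{eq:normsum} is continuous on the connected region $\{s_1>\dots>s_n>0\}$ and attains values arbitrarily close to both $1$ (let all ratios $s_{i+1}/s_i\to 1$) and $n$ (let all ratios tend to $0$), the intermediate value theorem already delivers the entire open interval $(1,n)$.

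The only genuinely delicate point is the bookkeeping of the two boundary intervals. The literal ratio in \eqref{sigmas} degenerates at the left endpoint $a=0$, so one must use the normalization $B_1=0$ of Proposition~\ref{lem:LM}(i) to record that $(0,a_1)$ contributes $0$ rather than the naive limiting value $1$, and separately note the contribution $1$ of $(a_n,1)$. Once \eqref{eq:normsum} is set up correctly, the bounds and the surjectivity are elementary algebra together with the explicit parameter choice above.
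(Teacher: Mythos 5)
Your proof is correct and takes essentially the same route as the paper: both reduce to the sum \eqref{sigmas} (with the integral term vanishing, $\mathcal A''=\emptyset$, the interval $(0,a_1)$ contributing $0$ via $B_1=0$ and $(a_n,1)$ contributing $1$, the interior terms lying strictly in $(0,1)$), and both achieve any $t\in(1,n)$ by making all interior contributions equal to $\frac{t-1}{n-1}$. Your choice $\rho=\frac{1-x}{1+x}$, $s_i=\rho^{\,i-1}$ is exactly the paper's geometric recursion $\alpha_j=\gamma^2\alpha_{j+1}$ with $\frac{\gamma-1}{\gamma+1}=\frac{t-1}{n-1}$ (i.e.\ $\gamma=1/\rho$), so the intermediate-value remark is merely an optional alternative.
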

\begin{proof}
It follows from \eqref{sigmas} with $\mathcal A=\{(0,a_1), (a_1,a_2),\dots, (a_n,1)\}$
\begin{equation}
\label{eq:not3}
\|\nabla \mu_C\|_2^2=\sum_{j=0}^{n}\frac{\sigma_j-1}{\sigma_j+1}.
\end{equation}
Note that $\sigma_n=+\infty$, $\sigma_0=1$, the first term in the sum equals
zero, the last term equals $1$, and all the intermediate terms have values in
the interval $(0,1)$. This proves the first assertion of the lemma.
To prove the second assertion, let the number $\gamma$ satisfy the equality $\frac{\gamma-1}{\gamma+1}=\frac{t-1}{n-1}$ and note that $\gamma>1$.
Let $\sigma_2=\dots=\sigma_{n-1}=\gamma$.
Then, since $\sigma_n=+\infty$ and $\sigma_0=1$, we have $\sum_{j=0}^{n}\frac{\sigma_j-1}{\sigma_j+1}=t$ and,
since $\gamma=\sigma_j=\sqrt{\frac{\rlog{a_{j}}}{\rlog{a_{j+1}}}}$, $j=2,\dots,n$, we have $\alpha_{j}=\gamma^2 \alpha_{j+1}$, where $\alpha_j = \rlog{a_{j}}$, which recursively
defines suitable $a_1,\dots,a_{n-1}$ once we arbitratily set the value of $a_n\in(0,1)$. \end{proof}

The following statement is a corollary of Lemma~2.1 from \cite{AP}.
\begin{lemma}
 \label{ap3}
 Let let
$J$ be the functional \eqref{Jv} with the function $f$ of critical growth.
Let $w\in H_{0;r}^1(B)$, $v_k\to 0$ in $H_{0;r}^1(B)$
and $s_k\to\infty$. If
\begin{equation}
 \label{ap3b}
 \int_0^rs_k^{3/2}\frac{f(s_k^{1/2}(w(\rho)+v_k(\rho)))}{8\pi}
 \rho^{2s_k-2}\rho \ud \rho\to -rw'(r)=-\int_0^r \Delta w(\rho)\rho\ud\rho
\text{ in } L^2(B),
\end{equation}
then the sequence $u_k=\delta_{1/s_k}(w+v_k)$ is critical for the functional $J$.
\end{lemma}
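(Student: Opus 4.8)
The plan is to establish directly that $\|J'(u_k)\|_{H^{-1}(B)}\to 0$, which is the assertion that $u_k$ is a critical sequence. Since $w$ and $v_k$ are radial, each $u_k$ is radial, so $J'(u_k)$ is a rotation-invariant element of $H^{-1}(B)$; by the orthogonal (Dirichlet) splitting of $H^1_0(B)$ into its radial subspace and its complement — on which a rotation-invariant functional vanishes, i.e. symmetric criticality — it suffices to control $\langle J'(u_k),v\rangle$ for radial $v$, the pairing being well defined by Trudinger--Moser together with the critical growth of $f$. Because the group $G$ of deflations is unitary on $H^1_{0;r}(B)$, I would write every radial test function as $v=\delta_{1/s_k}\varphi$ with $\|\nabla\varphi\|_2=\|\nabla v\|_2$, so the task reduces to bounding $\langle J'(u_k),\delta_{1/s_k}\varphi\rangle$ uniformly over $\|\nabla\varphi\|_2\le 1$.

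Next I would treat the two terms of $\langle J'(u_k),\delta_{1/s_k}\varphi\rangle=\int_B\nabla u_k\cdot\nabla(\delta_{1/s_k}\varphi)\,\ud x-\frac{1}{8\pi}\int_B f(u_k)\,\delta_{1/s_k}\varphi\,\ud x$ separately. Polarizing the identity $\|\nabla\delta_s u\|_2=\|\nabla u\|_2$ and substituting $\rho=r^{1/s_k}$ turns the quadratic term into $2\pi\int_0^1 (w+v_k)'(\rho)\varphi'(\rho)\rho\,\ud\rho$. The same substitution applied to the nonlinear term produces exactly the integrand of the hypothesis, namely $2\pi\int_0^1 g_k(\rho)\varphi(\rho)\rho\,\ud\rho$ with $g_k(\rho)=s_k^{3/2}\frac{f(s_k^{1/2}(w+v_k)(\rho))}{8\pi}\rho^{2s_k-2}$; writing $G_k(r)=\int_0^r g_k(\rho)\rho\,\ud\rho$ for the primitive appearing in \eqref{ap3b} and integrating by parts, the vanishing of $\varphi(1)$ and of $G_k(0)$ rewrites this term as $-2\pi\int_0^1 G_k(\rho)\varphi'(\rho)\,\ud\rho$.

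Combining, the purely linear $v_k$-part splits off as $2\pi\int_0^1 v_k'\varphi'\rho\,\ud\rho$, bounded by $\|\nabla v_k\|_2\|\nabla\varphi\|_2$, while the $w$- and $G_k$-contributions assemble into the single flux pairing $2\pi\int_0^1\bigl(\rho w'(\rho)+G_k(\rho)\bigr)\varphi'(\rho)\,\ud\rho$. Setting $E_k=\rho w'+G_k$, hypothesis \eqref{ap3b} gives $E_k\to 0$, and the identity $\int_0^1 E_k\varphi'\,\ud\rho=\int_0^1\frac{E_k}{\rho}\,\varphi'\,\rho\,\ud\rho$ followed by Cauchy--Schwarz yields $\bigl|2\pi\int_0^1 E_k\varphi'\,\ud\rho\bigr|\le\sqrt{2\pi}\,\bigl(\int_0^1\frac{|E_k|^2}{\rho}\,\ud\rho\bigr)^{1/2}\|\nabla\varphi\|_2$. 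Taking the supremum over $\|\nabla\varphi\|_2\le 1$ then gives $\|J'(u_k)\|_{H^{-1}(B)}\le \sqrt{2\pi}\,\bigl(\int_0^1\frac{|E_k|^2}{\rho}\,\ud\rho\bigr)^{1/2}+\|\nabla v_k\|_2$, and both summands tend to $0$ — the first by \eqref{ap3b}, the second by $v_k\to 0$ in $H^1_{0;r}(B)$.

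The step I expect to require the most care, and the main obstacle, is matching the mode of convergence in \eqref{ap3b} with the norm dual to $\|\nabla\varphi\|_2$: the flux pairing $\int_0^1 E_k\varphi'\,\ud\rho$ is continuous in $\|\nabla\varphi\|_2$ precisely with respect to the weighted norm $\bigl(\int_0^1\frac{|E_k|^2}{\rho}\,\ud\rho\bigr)^{1/2}$, which is the genuine $L^2(B)$-norm of the flux once one passes to the logarithmic coordinate $t=\rlog{r}$ — under which $\|\nabla\varphi\|_2^2=2\pi\int_0^\infty|\partial_t\varphi|^2\,\ud t$ and the deflations $\delta_s$ act as the standard $L^2$-dilations. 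Thus the ``$L^2(B)$'' convergence in \eqref{ap3b} must be read in this coordinate; verifying that it is indeed the correct dual norm, so that the hypothesis delivers \emph{uniform} rather than merely pointwise smallness of the flux pairing, is the only delicate point, the remainder being the change of variables and the unitarity of $G$ already recorded above.
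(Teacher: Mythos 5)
Your setup (symmetric criticality to reduce to radial test functions, unitarity of the deflations to write $v=\delta_{1/s_k}\varphi$, the change of variables $\rho=r^{1/s_k}$, and the integration by parts producing the flux pairing $2\pi\int_0^1 E_k\varphi'\,\ud\rho$ with $E_k=\rho w'+G_k$) is correct and consistent with the computations in the paper. But the final step contains a genuine gap, and it is exactly the point you flag yourself: your Cauchy--Schwarz estimate needs $\int_0^1 |E_k|^2\,\ud\rho/\rho\to 0$, i.e.\ $L^2$-smallness with respect to the weight $\ud\rho/\rho$ (equivalently, the unweighted $L^2(\ud t)$-norm in the coordinate $t=\rlog{\rho}$), whereas the hypothesis \eqref{ap3b}, as the paper itself interprets it in \eqref{ap2}, is convergence in $L^2(B)$, i.e.\ with respect to the weight $\rho\,\ud\rho$. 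These norms are inequivalent ($\ud\rho/\rho$ is strictly stronger near $\rho=0$), and the implication fails for general $E_k$: taking $\varphi'=E_k/\rho$ (normalized) shows the dual norm of the flux pairing \emph{is} $\bigl(\int_0^1|E_k|^2\,\ud\rho/\rho\bigr)^{1/2}$, so $\|E_k\|_{L^2(\rho\,\ud\rho)}\to 0$ alone cannot bound it. Your resolution --- declaring that the ``$L^2(B)$'' in \eqref{ap3b} ``must be read'' in the logarithmic coordinate --- silently replaces the hypothesis by a stronger one rather than proving the lemma as stated. Note also that your argument uses no property of $f$ at all beyond well-definedness of the pairing; if the lemma were provable at this level of generality (arbitrary $E_k$ small in $L^2(\rho\,\ud\rho)$), it would be false by the duality example above, which is a sign that the structure of the flux must enter.

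The paper avoids this issue entirely by a reduction rather than a direct estimate: from \eqref{ap3b} and $v_k\to 0$ it passes to \eqref{ap2}, weakens the weight from $r\,\ud r$ to $r^{2s_k-1}\,\ud r$ (absorbing the $v_k$-contribution via $|rv_k'|^2r^{2s_k-1}\le |v_k'|^2 r$), and then the substitution $\tilde r=r^{s_k}$ turns \eqref{ap} exactly into \eqref{ap0}, which is the hypothesis of Lemma~2.1 of \cite{AP}; that cited lemma --- which exploits the specific form $ru_k'+\int_0^r f(u_k)\rho\,\ud\rho/8\pi$ of the flux and the critical growth of $f$, not just its $L^2(r\,\ud r)$-smallness --- is what delivers $J'(u_k)\to 0$. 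In other words, the bridging of precisely the weight gap you identified is the nontrivial content delegated to Adimurthi--Prashanth, and your proposal, having chosen to re-prove that content directly, does not supply it. To repair your argument you would either have to invoke \cite{AP} Lemma~2.1 as the paper does, or prove from the structure of $E_k$ (using $\int_0^1|\rho w'|^2\,\ud\rho/\rho=\|\nabla w\|_2^2/2\pi<\infty$, the Trudinger--Moser bounds on $f(u_k)$, and the concentration of $\rho^{2s_k-2}$) that the $L^2(\rho\,\ud\rho)$-smallness self-improves to $L^2(\ud\rho/\rho)$-smallness; neither step is present in the proposal.
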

\begin{proof} From $v_k\to 0$ and \eqref{ap3b} we have
\begin{equation}
 \label{ap2}
 \int_0^1\left|
 \int_0^r\left(\Delta w(\rho)+s_k^{3/2}\frac{f(s_k^{1/2}w_k(\rho))}{8\pi}
 \rho^{2s_k-2}\right)\rho \ud \rho\right|^2
 r\ud r\to 0,
\end{equation}
which immediately implies
\begin{equation}
 \label{ap}
 \int_0^1\left|rw_k'(r)+
 \int_0^rs_k^{3/2}\frac{f(s_k^{1/2}w_k(\rho))}{8\pi}
 \rho^{2s_k-1}\ud \rho\right|^2
 r^{2s_k-1}\ud r\to 0.
\end{equation}
Rewrtiting \eqref{ap} in variables $\tilde r= r^{s_k}$, $\tilde \rho= \rho^{s_k}$, we have
\begin{equation}
 \label{ap0}
 \int_0^1\left|ru_k'(r)+
 \int_0^r\frac{f(u_k(\rho))}{8\pi}
 \rho\ud \rho\right|^2
 r\ud r\to 0,
\end{equation}
which is the condition of Lemma~2.1 in \cite{AP} that provides that $u_k$ is a
critical sequence.
\end{proof}
 \begin{theorem}
\label{badlevels} Let $C_-=\emptyset$, $C_+=\{a_1,\dots,a_n\}$,  $0<a_1<a_2<\dots<a_{n}<1$, $n\in\N$, and let
$J$ be the functional \eqref{Jv} with the function $f\in C^1$ of $4\pi$-critical growth satisfying (g0), (g1) and (g2).
Then there exist sequences $s_k\to\infty$, $v_k\in C_{0;r}^1(B)$ ,
$\|v_k'\|_\infty\to 0$,
such that the sequence $u_k=\delta_{1/s_k}(\mu_{C}+v_k)$ satisfies
\begin{equation}
\label{crit1}
J(u_k)\to \frac{1}{2}\|\mu_C\|_2^2, J'(u_k)\to 0 {\in} H_0^1(\Omega).
\end{equation}
Furthermore, for every $c\in(\frac12,\frac{n}{2})$ there exist points $0<a_1<a_2<\dots<a_{n}<1$ such that $\frac{1}{2}\|\mu_C\|_2^2=c$.
\end{theorem}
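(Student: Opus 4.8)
The plan is to verify the hypothesis \eqref{ap3b} of Lemma~\ref{ap3} for the specific profile $w=\mu_C$ with $C_-=\emptyset$ and $C_+=\{a_1,\dots,a_n\}$, which immediately yields that $u_k=\delta_{1/s_k}(\mu_C+v_k)$ is critical and hence \eqref{crit1}. The second assertion about hitting any prescribed level $c\in(\tfrac12,\tfrac n2)$ then follows directly from Lemma~\ref{norm-of-tower}: since $\tfrac12\|\mu_C\|_2^2$ ranges over $(\tfrac12,\tfrac n2)$ as the $a_j$ vary, and $J(u_k)\to\tfrac12\|\mu_C\|_2^2$, choose the points furnished by that lemma. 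So the real work is constructing $s_k\to\infty$ and the corrector $v_k$ with $\|v_k'\|_\infty\to0$ so that the concentration condition holds.

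The core difficulty is that $\mu_C$ by itself does \emph{not} solve the relevant limiting equation: the nonlinear term $s_k^{3/2}f(s_k^{1/2}\mu_C)\rho^{2s_k-2}/8\pi$ must reproduce $-\Delta\mu_C$, but $\mu_C$ takes the extremal value $\sqrt{\tfrac1{2\pi}\rlog\rho}$ on the discrete set $C_+$, where $4\pi s_k\mu_C(\rho)^2-2s_k\rlog\rho$ vanishes, and is strictly below it elsewhere, where that exponent tends to $-\infty$. Writing $g(s)e^{4\pi s^2}$ for $\tfrac1{8\pi}f(s)$ and using \eqref{f1}, one sees that with $w=\mu_C$ alone the mass $s_k^{3/2}g(s_k^{1/2}\mu_C)e^{4\pi s_k\mu_C^2}\rho^{2s_k-2}$ would concentrate into Dirac masses at the points $a_j$, whereas $-\Delta\mu_C$ is a genuine measure with a jump of $\mu_C'$ at each $a_j$ \emph{and} a distributed harmonic part. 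The role of $v_k$ is precisely to tune the height of $\mu_C+v_k$ near each $a_j$ so that the exponential delivers exactly the correct coefficient $-\,a_j\,[\mu_C'(a_j^+)-\mu_C'(a_j^-)]$, i.e. to match the jump of the radial derivative, while leaving the $\exp L^2$ and gradient contributions negligible.

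First I would reduce \eqref{ap3b}, by integration by parts and the variable change already used in Lemma~\ref{ap3}, to showing that the sequence of measures $\nu_k=s_k^{3/2}\tfrac1{8\pi}f(s_k^{1/2}(\mu_C+v_k))\rho^{2s_k-2}\rho\,\ud\rho$ converges in the appropriate norm to $-\Delta\mu_C$. Away from $C_+$ the integrand decays exponentially because the exponent $4\pi s_k(\mu_C+v_k)^2-2s_k\rlog\rho$ is bounded above by a negative multiple of $s_k$, so $\nu_k\to0$ there and only the behavior in shrinking neighborhoods of the $a_j$ matters. Near each $a_j$ I would introduce a logarithmic rescaling centered at $a_j$ (the natural variable is $t=\rlog\rho$, in which $\mu_C$ is piecewise linear, matching the Carleson--Chang normalization cited after Definition~\ref{Mosertower}), and solve, to leading order, for the local profile of $v_k$ that produces the prescribed concentrated mass; this is an explicit one-dimensional matching computation for each $j$, calibrated so that the total mass equals the jump in $r\mu_C'(r)$ at $a_j$. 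One must check that $v_k$ can be taken in $C_{0;r}^1(B)$ with $\|v_k'\|_\infty\to0$, which holds because the required correction is of order $1/s_k$ in height and spread over an interval of length $O(1/s_k)$ in the $t$-variable.

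The main obstacle I anticipate is the delicate balancing near the endpoint interval $(a_n,1)$ and, more generally, ensuring the matching is uniform across all $n$ points simultaneously so that the full sum reconstructs $-\Delta\mu_C$ exactly in the $L^2(B)$ sense demanded by \eqref{ap3b}, rather than merely weakly. The exponential sensitivity of $e^{4\pi s_k(\mu_C+v_k)^2}$ to $v_k$ means that an error of order $o(1/s_k)$ in $v_k$ can produce an $O(1)$ error in the mass, so the construction of $v_k$ must be carried out to sufficient precision and the remainder estimates must exploit (g0), (g1), (g2) to control the subexponential factor $g$ and to discard the lower-order contributions of $F$ in the energy. Once the local profiles are assembled into a single $v_k$ via a partition of unity and the cross terms between distinct $a_j$ are shown negligible (using \eqref{f1} to bound the exponent off each concentration point), the hypothesis of Lemma~\ref{ap3} is met and the theorem follows.
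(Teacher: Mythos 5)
Your overall route is the same as the paper's: reduce the criticality claim to the hypothesis \eqref{ap3b} of Lemma~\ref{ap3}, pass to the variable $t=\logfrac$ in which $\mu_C$ is piecewise linear, observe that the density $s_k^{3/2}\frac{1}{8\pi}f(s_k^{1/2}(\mu_C+v_k))\rho^{2s_k-2}$ decays exponentially away from the points $\alpha_j^2=\rlog{a_j}$ and calibrate $v_k$ near each $a_j$ so that the resulting Laplace-type integrals reproduce the atomic coefficients in \eqref{deltaw}; then (g2) disposes of $\int F(u_k)$ and Lemma~\ref{norm-of-tower} yields the range of levels, exactly as in the paper. One harmless misstatement: for finite $C$ there is no ``distributed harmonic part'' in $-\Delta\mu_C$. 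Since $\mu_C$ is of the form $A_j+B_j\logfrac$ between consecutive points of $C$, it is piecewise harmonic and \eqref{deltaw} is purely atomic, $\sum_j q_j\varphi(a_j)$; a distributed term appears only when $C$ has nonempty interior, as in Theorem~\ref{badlevelsC}. Since you then match only the jumps of $r\mu_C'(r)$, this error does not propagate.

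There is, however, a genuine quantitative gap at the one point where you justify $\|v_k'\|_\infty\to 0$, namely the claim that the corrector has height $O(1/s_k)$ and is supported on a $t$-window of length $O(1/s_k)$. Both scales are wrong. First, the height: with, say, $g(s)\sim s$, the uncorrected prefactor is $s_k^{3/2}g(s_k^{1/2}\mu_C)\sim s_k^{2}$, while the integration in $\tau=s_k(t-\alpha_j^2)$ absorbs only one power of $s_k$; the exponential must therefore supply a factor $s_k^{-1}$, which forces $8\pi s_k\mu_C v_k\approx-\log s_k$, i.e.\ $v_k$ has height $\Theta\bigl(\frac{\log s_k}{s_k}\bigr)$ --- this is precisely the content of the paper's calibration equation \eqref{v}, normalized so that $e^{s_k\psi_k^{(j)}}=M_j s_k$ in \eqref{psik2}. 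Second, the support: if the calibration holds only on a window of $t$-length $O(1/s_k)$, then just outside it the uncorrected density at $t$-distance $\delta$ from $\alpha_j^2$ contributes on the order of $s_k e^{-\lambda s_k\delta}$ (with $\lambda>0$ determined by the local slopes $1/\gamma_j^{\pm}$), which is not small for $\delta=O(1/s_k)$; one needs the corrected region to have width at least of order $\frac{\log s_k}{s_k}$, and any transition of height $h_k$ over width $\ell_k$ has slope $\sim h_k/\ell_k$, so even your own numbers give $\|v_k'\|_\infty=O(1)$, and the true height over an $O(1/s_k)$ window gives $O(\log s_k)$ --- the conclusion $\|v_k'\|_\infty\to0$ fails exactly where you assert it. The paper's construction avoids this entirely: it solves \eqref{v} on \emph{fixed} intervals $[\epsilon,1-\epsilon]$ by the implicit function theorem (using (g0) and $f\in C^1$), obtaining solutions $v_k^{(j)}$ that vanish in $C^1$ uniformly there, and glues them with $k$-independent cutoffs $\chi_j$, so that \eqref{weaktow}, its upgrade to the $L^2$ convergence \eqref{ap3b} via uniform exponential decay off the $a_j$, and the $C^1$-smallness of $v_k$ all hold simultaneously. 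Your sketch needs this repair to be a proof.
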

\begin{proof}
Let us prove first that $J'(u_k)\to 0$.
By Lemma~\ref{ap3}, it suffices to find a radial vanishing sequence $v_k$ such that
$u_k=\delta_{1/s_k}(\mu_{C}+v_k)$ satisfies \eqref{ap3b}.
Let us require first \eqref{ap3b} with only weak convergence, namely,
\begin{equation}
\label{weaktow}
I_k(\varphi)=\int_0^1s_k^{3/2}\frac{f(s_k^{1/2}(\mu_C(r)+v_k(r))}{8\pi}r^{2s_k-2}\varphi(r)r\ud r \rightarrow \int_0^1\mu_C'(r)\varphi'(r)r\ud r
\end{equation}
for every $\varphi\in C_{0;r}^\infty(B)$.

An elementary calculation of the right hand side gives
\begin{equation}
\label{deltaw}
\int_0^1\mu_C'(r)\varphi'(r)r\ud r=\sum_{j=1}^n q_j\varphi(a_j),
\end{equation}
with some $q_i\in\R$. Specific values of $q_i$ are not invoked in the subsequent argument, but in order that the reader will not feel empty-handed,
we quote them nonetheless:
\begin{equation}
\label{qj}
q_j=\frac{1}{\sqrt{2\pi}}\frac{\alpha_{j-1}-\alpha_{j+1}}{(\alpha_{j}+\alpha_{j+1})(\alpha_{j}+\alpha_{j-1})}, \;j=1,\dots,n,
\end{equation}
which for $j=1$ is to be understood in the sense of the limit as $\alpha_0\to+\infty$, and where, as before, $\alpha_j=\sqrt{\rlog{a_j}}$.

In order to evaluate $I_k(\varphi)$, we change the integration variable to $t=\rlog{r}$:
\begin{equation}
\label{Ik}
I_k(\varphi)=\int_0^\infty e^{s_k(4\pi \mu_C(e^{-t})^2-2t+\psi_k(t))}\varphi(e^{-t})\ud t,
\end{equation}
where
\begin{equation}
\label{psik}
\psi_k(t)=\frac32\frac{\log s_k}{s_k}+\frac{\log g(s_k^{1/2}(\mu_C(e^{-t})+w_k(e^{-t}))}{s_k}+8\pi \mu_Cv_k+4\pi v_k^2.
\end{equation}
Evaluation of $4\pi \mu_C(e^{-t})^2-2t$ on the interval $t\in (\alpha_{j+1}^2,\alpha_{j}^2)$, corresponding to
$r\in (a_j,a_{j+1})$ gives, after elementary computation and recalling the notation $\sigma_j=\frac{\alpha_j}{\alpha_{j+1}}$, the two following identities:
\[
4\pi \mu_C(e^{-t})^2-2t=-2\frac{\sigma^2_j-1}{\sigma^2_j+1}(\alpha_j^2-t)+4\pi B_j^2(t-\alpha_j^2)^2,\,t\in (\alpha_{j+1}^2,\alpha_{j}^2),
\]
and (after replacing the index $j$ with $j-1$)
\[
4\pi \mu_C(e^{-t})^2-2t=2\frac{\sigma^2_{j-1}-1}{\sigma^2_{j-1}+1}(t-\alpha_j^2)+4\pi B_{j-1}^2(t-\alpha_j^2)^2, t\in (\alpha_{j}^2,\alpha_{j-1}^2).
\]
Let $\gamma_j^-=\left(2\frac{\sigma^2_{j-1}-1}{\sigma^2_{j-1}+1}\right)^{-1}$ and $\gamma_j^+=\left(2\frac{\sigma^2_{j}-1}{\sigma^2_{j}+1}\right)^{-1}$.
For every $j=1,\dots,n$, set
\[
 M_j=(\gamma_j^-+\gamma_j^+)q_j
\]
and consider the equation
\begin{equation}
\label{v}
8\pi \mu_C(r)v_k(r)+4\pi v_k(r)^2+\frac{\log g(s_k^{1/2}(\mu_C+v_k))}{s_k}+\frac{\log s_k}{2s_k}=\frac{\log M_j}{s_k}, r\in(0,1).
\end{equation}
Fix $\epsilon\in (0,\max\{a_1,1-a_n\})$.
For all $k$ large enough, by the implicit function theorem, using (g0) and the assumption that $g\in C^1$,
we have a unique solution $v_k^{(j)}$ that converges to zero uniformly on $[\epsilon,1-\epsilon]$.
Note that from \eqref{v} follows that, with $\psi_k^{(j)}$ as in \eqref{psik} with  $v_k=v_k^{(j)}$,
\begin{equation}
\label{psik2}
e^{s_k\psi_k^{(j)}}=M_j s_k.
\end{equation}

Differentiation of \eqref{v} with respect to $r$ together with (g0) implies that $v_k^{(j)\prime}$ converges to zero uniformly on $[\epsilon,1-\epsilon]$.
Let $\tilde\omega_j\ni a_j$, $j=1,\dots,n$ be disjoint subintervals of $(0,1)$, $\omega_j\ni a_j$ be closed subintervals of $\tilde\omega_j$ and let
$\{\chi_j\}_{j=1,\dots_{n}}\in C_0^\infty(0,1)$ be supported in
$\tilde\omega_{j}$ and equal $1$ on  $\omega_j$. Define
$v_k=\sum_{k=1}^n\chi_jv_k^{(j)}$. Then $v_k\to 0$ in $H_0^1(B)$.
Furthermore, lengthly but elementary calculations show that $v_k$ also satisfies \eqref{weaktow}. We give here only a sketch of the calculations,
leaving details to the reader. The principal term in the exponent of \eqref{Ik} equals, thanks to \eqref{psik2},
\[
s_k(4\pi \mu_C(e^{-t})^2-2t)=
\begin{cases}
-\frac{s_k}{\gamma_j^+}(t-\alpha_j^2)+O(|\alpha_j^2-t|^2), & t\ge \alpha_j^2,
\\
-\frac{s_k}{\gamma_j^-}(\alpha_j^2-t)+O(|\alpha_j^2-t|^2), & t \le \alpha_j^2,
\end{cases}
\]
whch tends to $-\infty$ uniformly outside of arbitrarily small neighborhoods of $\alpha_j$. This assures that all the nonzero contributions to the limit come from small neighborhoods of points $\alpha_j^2$ (corresponding to $a_j$). The exact value of the exponent in a neighborhood of $\alpha_j^2$ is set by \eqref{v}, so that the elementary integration in the variable $\tau=s_k(t-\alpha_j^2)$ over $-\infty<\tau<0)$ and over $0<\tau<\infty$
gives in the limit a multiple of $\varphi(r-a_j)$, which matches coefficient \eqref{qj} by the choice of the parameter $M_j$ above.
\par
Then \eqref{ap3b} is immediate since the sequence in \eqref{weaktow} converges 
uniformly to zero outside of the points $a_j$ and thus \eqref{ap3b}
holds true, and the integration is reduced to neighborhoods of $a_j$, where the
argument above can be repeated with only trivial modifications. This completes
the proof for $J'(u_k)\to 0$.

From (g2) it follows immediately that $\int F(u_k)\to 0$, and thus
\begin{equation}
 \begin{split}
J(u_k)=\frac12\|\nabla u_k\|_2^2+o(1)=\frac12\|\nabla \delta_{s_k}\mu_C\|_2^2+o(1)\\
=  \frac12\|\nabla \mu_C\|_2^2+o(1).
 \end{split}
\end{equation}
Then the last assertion of the theorem follows from Lemma~\ref{norm-of-tower}.
\end{proof}

\begin{remark}
\par
\begin{enumerate}
 \item Note that since our nonhomogeneous dilations are isometric operators on the Sobolev space,
$u_k-g_{s_k}\mu_{C_n}\to 0$ in $H_0^1(B)$.
 \item The case $n=1$ of Theorem~\ref{badlevels}, corresponding to $c=\frac12$,  is proved in \cite{AP}.
An elegantly balanceed calculation in \cite{AP} allows to pick up the sequence
$v_k$ not merely vanishing in Sobolev norm,
but having an explicit form $\lambda_k\mu_{a_1}$ with $\lambda_k\to 0$.
\end{enumerate}
\end{remark}

\section{Critical sequences: infinite $C$}
\begin{theorem}
 \label{thm:critseq} Let $J$ be the functional \eqref{Jv} with the $4\pi$-critical growth function $f$ satisfying (g0), (g1) and (g2).
 If $C_0=\emptyset$ and $C_+$ is a compact subset of $(0,1)$ of measure zero, then there exists a sequence $s_k\to +\infty$, a sequence
 $\psi_k\in H_{0;r }^1(B)$ such that $\|\nabla \psi_k\|\to 0$, $J'(\delta_{s_k}\mu_C+\psi_k)\to 0$ and
 $J(\delta_{s_k}\mu_C+\psi_k)\to \frac12\|\nabla\mu_C\|_2^2$.
\end{theorem}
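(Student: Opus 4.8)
The plan is to reduce the measure-zero case to the finite case of Theorem~\ref{badlevels} by approximating $C$ with finite towers and then running a diagonal argument, using that the nonhomogeneous dilations $\delta_s$ are isometries of $\sob$. The structural fact I would start from is that, since $C=C_+$ has Lebesgue measure zero and $C_-=\emptyset$, the continuous term $\tfrac14\int_C\frac{\ud r}{r\rlog{r}}$ in \eqref{sigmas} vanishes and $\mathcal A''=\emptyset$ (the profile never changes sign), so that the entire energy is carried by the harmonic pieces:
\be
\label{energy-sum}
\|\nabla\mu_C\|_2^2=\sum_{(a_n,b_n)\in\mathcal A}\frac{\sqrt{\rlog{a_n}}-\sqrt{\rlog{b_n}}}{\sqrt{\rlog{a_n}}+\sqrt{\rlog{b_n}}}<\infty ,
\ee
the sum being finite because $\mu_C\in\sob$ by Definition~\ref{Mosertower}. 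It is precisely this absence of a continuous part that makes a finite tower a faithful approximation of $\mu_C$.

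First I would construct the approximants. Enumerating the components of $A$ as $(a_n,b_n)$, for each $N$ let $C_N\subset C$ be the finite set of the endpoints $a_n,b_n\in(0,1)$ with $n\le N$, and let $\mu_{C_N}$ be the corresponding tower. Then $\mu_{C_N}=\mu_C$ on the retained components and at the nodes, whereas on each gap $(p,q)$ between consecutive nodes $\mu_{C_N}$ is the single harmonic chord sharing the boundary values of $\mu_C$. As a harmonic function minimizes the Dirichlet integral for given boundary data, $\mu_{C_N}$ is the energy projection of $\mu_C$ on every gap, so that $\langle\nabla\mu_{C_N},\nabla(\mu_C-\mu_{C_N})\rangle=0$ and
\[
\|\nabla(\mu_C-\mu_{C_N})\|_2^2=\|\nabla\mu_C\|_2^2-\|\nabla\mu_{C_N}\|_2^2\le\sum_{n>N}\frac{\sqrt{\rlog{a_n}}-\sqrt{\rlog{b_n}}}{\sqrt{\rlog{a_n}}+\sqrt{\rlog{b_n}}}\longrightarrow0 ,
\]
the tail of \eqref{energy-sum}. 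Hence $\mu_{C_N}\to\mu_C$ in $\sob$ and $\|\nabla\mu_{C_N}\|_2^2\to\|\nabla\mu_C\|_2^2$. This is the step where measure zero is indispensable: a set of positive measure carries a genuinely continuous share of the energy that no finite tower can reproduce, which is why the interval case must be argued separately in Theorem~\ref{badlevelsC}.

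Next I would feed each $C_N$ into Theorem~\ref{badlevels}: it provides $s_k^{(N)}\to\infty$ and radial $v_k^{(N)}$ with $\|(v_k^{(N)})'\|_\infty\to0$, hence $\|\nabla v_k^{(N)}\|_2\to0$, such that $u_k^{(N)}=\delta_{1/s_k^{(N)}}(\mu_{C_N}+v_k^{(N)})$ satisfies $J'(u_k^{(N)})\to0$ and $J(u_k^{(N)})\to\tfrac12\|\nabla\mu_{C_N}\|_2^2$. For each $N$ I would then pick $k(N)$ so large that simultaneously $\|J'(u_{k(N)}^{(N)})\|\le\tfrac1N$, $|J(u_{k(N)}^{(N)})-\tfrac12\|\nabla\mu_{C_N}\|_2^2|\le\tfrac1N$, $\|\nabla v_{k(N)}^{(N)}\|_2\le\tfrac1N$ and $s_{k(N)}^{(N)}\ge N$. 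Setting $u_N:=u_{k(N)}^{(N)}$, $s_N:=s_{k(N)}^{(N)}$, and using the isometry of $\delta_{1/s_N}$, one has $u_N=\delta_{1/s_N}\mu_C+\psi_N$ with
\[
\psi_N=\delta_{1/s_N}\big(\mu_{C_N}-\mu_C+v_{k(N)}^{(N)}\big),\qquad \|\nabla\psi_N\|_2\le\|\nabla(\mu_{C_N}-\mu_C)\|_2+\tfrac1N\to0 .
\]
By the previous paragraph $J(u_N)\to\tfrac12\|\nabla\mu_C\|_2^2$ while $J'(u_N)\to0$, so $(s_N,\psi_N)$ is the required sequence; the convergence $\int F(u_N)\to0$ behind the value of $J$ is inherited from Theorem~\ref{badlevels}, where it rests on (g2).

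The one genuinely new ingredient beyond Theorem~\ref{badlevels} is thus the approximation bound of the second paragraph, i.e. the uniform control of the energy tail $\sum_{n>N}$; the rest is diagonal bookkeeping, and I expect no further obstacle along this route. For completeness I note the alternative of arguing directly, solving the implicit-function equation \eqref{v} on all components $(a_n,b_n)$ at once and matching the weak limit \eqref{weaktow} to the distribution $\int_0^1\mu_C'(r)\varphi'(r)r\,\ud r=\sum_n B_n\big(\varphi(a_n)-\varphi(b_n)\big)$. There the hard part would be the uniformity, across infinitely many and arbitrarily short components, of the Laplace-type asymptotics of $I_k(\varphi)$, together with the summability in $n$ of the $\sob$-norms of the localized corrections $\chi_n v_k^{(n)}$; the diagonal route is preferable precisely because it bypasses this uniformity.
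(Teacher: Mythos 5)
Your proposal is correct and follows the same overall architecture as the paper's proof: approximate $C$ by finite subsets $C_N\subset C$, invoke Theorem~\ref{badlevels} for each $C_N$, and diagonalize, using that each dilation $\delta_{1/s}$ is an isometry of $\sob$ so that $\psi_N=\delta_{1/s_N}\bigl(\mu_{C_N}-\mu_C+v_{k(N)}^{(N)}\bigr)$ vanishes in gradient norm. Where you genuinely differ is the approximation step. The paper selects the finitely many components of $(0,1)\setminus C$ of total length at least $1-\epsilon$ (a selection that exists precisely because $|C|=0$), sets $C_\epsilon=\partial A_\epsilon$, and bounds $\|\nabla(\mu_C-\mu_{C_\epsilon})\|_2^2$ by a quantity $\kappa_\epsilon$ controlled by the measure of the image of $A\setminus A_\epsilon$ under $r\mapsto\sqrt{\rlog{r}}$. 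You instead take the endpoints of the first $N$ components and observe that on each gap between consecutive nodes $\mu_{C_N}$ is the harmonic, hence Dirichlet-minimizing, interpolant of the boundary values of $\mu_C$, which gives the exact orthogonality $\|\nabla(\mu_C-\mu_{C_N})\|_2^2=\|\nabla\mu_C\|_2^2-\|\nabla\mu_{C_N}\|_2^2$ and a bound by the tail of the convergent series in \eqref{sigmas}; this Pythagoras-plus-tail argument is cleaner and quantitatively sharper than the paper's $\kappa_\epsilon$ estimate, and your handling of the level (choosing $k(N)$ so that $|J(u_{k(N)}^{(N)})-\tfrac12\|\nabla\mu_{C_N}\|_2^2|\le 1/N$ and then letting $\|\nabla\mu_{C_N}\|_2^2\to\|\nabla\mu_C\|_2^2$) is a legitimate shortcut past the paper's re-derivation of $\int F(u_k)\to 0$ via (g2). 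Two caveats, neither affecting validity: first, in your energy series the terms for the extreme components $(0,b_1)$ and $(a_m,1)$ must be read with the paper's conventions $\sigma=1$ resp.\ $\sigma=+\infty$, contributing $0$ resp.\ $1$; second, your aside that measure zero is ``indispensable'' because \emph{no} finite tower can reproduce the continuous share of the energy is overstated --- finite towers with nodes chosen \emph{inside} a set $C$ of positive measure do approximate $\mu_C$ in $H^1$ (in the variable $t=\rlog{r}$ this is piecewise-linear interpolation of $\sqrt{t/2\pi}$, which converges in $H^1$ on compact $t$-intervals), so measure zero is essential only for your specific endpoints-only choice of nodes and for the paper's length-based selection of $\mathcal A_\epsilon$, not for finite-tower approximation as such; the real content of Theorem~\ref{badlevelsC} is the different construction of the correction $v_k$, not an obstruction to $H^1$-approximation.
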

\begin{proof}
1. Let $\mathcal A_\epsilon$ be a collection of all maximal intervals contained in $(0,1)\setminus C$, such that that $|A_\epsilon|\ge 1-\epsilon$, where $A_\epsilon=\bigcup_{(a,b)\in \mathcal A_\epsilon}(a,b)$ and $\epsilon>0$ is small enough that both intervals with $0$ or with $1$ as an endpoint are included in $\mathcal A_\epsilon$. Let $B_\epsilon=(0,1)\setminus \bar{A_\epsilon}$ and let $\mathcal B_\epsilon$ be the collection of maximal intervals contained in $B_\epsilon$. Define a finite set $C_\epsilon=\partial A_\epsilon$ in $(0,1)$.  Then, invoking notations from the proof of Theorem~\ref{lem:toy_pyramid}, using \eqref{eq:not3} and noting that \eqref{eq:not3} also remains valid as a positive series for countable collections of intervals, we have
\begin{equation}
\begin{split}
 \|\nabla(\mu_C-\mu_{C_\epsilon}\|_2^2\le 2\pi\int_{B_\epsilon}2(|\nabla \mu_C|^2+|\nabla \mu_{C_\epsilon}|^2)r\ud r
 \\
 \le \sum_{\mathcal A\setminus \mathcal A_\epsilon}(\sigma_i-1) + \sum_{\mathcal B_\epsilon}(\sigma_i-1)
 \le 2 \sum_{\mathcal A\setminus \mathcal A_\epsilon}(\sigma_i-1)
 \\
 \le \frac{2}{\sqrt{\rlog{\sup C}}}
 \sum_{\mathcal A\setminus \mathcal A_\epsilon}(\sqrt{\rlog{a_i}}-\sqrt{\rlog{b_i}}):=\kappa_\epsilon.
\end{split}
 \end{equation}
Note now that, since $|A\setminus A_\epsilon|\to 0$ as $\epsilon\to 0$, the measure of the image of $A\setminus A_\epsilon\subset[\epsilon,1-\epsilon]$ under the map $r\mapsto \sqrt{\rlog{r}}$ vanishes as well and thus  $\kappa_\epsilon\to 0$.
We conclude that for any closed set $C\subset(0,1)$, there is a sequence $C_j$ of finite subsets of $C$ such that $\mu_{C_j}\to\mu_C$
in $H_0^1(B)$.

2. Fix now a sequence $\epsilon_j\to 0$, set $C_j=C_{\epsilon_j}$ and, for every $j\in\N$, consider a sequence
$v_k^{(j)}\in C_0^1(B)$, $s_k^{(j)}\to\infty$, given by Theorem~\ref{badlevels} for the finite set $C_j$, such that $\|v_k^{(j)'}\|_\infty\le 1/k$ and
$\|J'(\delta_{s_k^{(j)}}(\mu_{C_j}+v_k^{(j)}))\|\le 1/k$. Then, for the diagonal sequence with $k=j$ we have
\[
 J'(\delta_{s_j}(\mu_C+\mu_{C_j}-\mu_C +v_j^{(j)}))\to 0.
\]
Let $\psi_j=\delta_{s_j}(\mu_{C_j}-\mu_C +v_j^{(j)})$. Then
\[
 \|\nabla \psi_j\|_2= \|\nabla(\mu_{C_j}-\mu_C +v_j^{(j)})\|_2\le \|\nabla (\mu_{C_j}-\mu_C)\|_2+1/j\to 0.
\]
Since, as we obtained above by diagonalization,  $J'(\delta_{s_j}\mu_C+\psi_j)\to 0$ and, by (g2), $\int_\Omega F(\delta_{s_j}\mu_C+\psi_j)\to 0$,
we have
\[
J(\delta_{s_j}\mu_C+\psi_j))= \frac12\|\nabla(\delta_{s_j}\mu_{C}+\psi_j)\|^2_2+o(1)=\frac12\|\nabla(\mu_{C}\|^2_2+o(1).
\]
\end{proof}

While Theorem~\ref{badlevels} is a relatively natural generalization of
Theorem~A in \cite{AP}, which constructs a critical sequence with the Moser
function as a blowup profile, where the set $C$ is a singleton, and
Theorem~\ref{thm:critseq} follows from Theorem~\ref{badlevels} by an
approximation argument, this gives no insight if there is also a critical
sequence that has a concentration profile $\mu_C$ with the set $C$ of positive
measure. This requires a different construction of the vanishing correction
$v_k$.

\begin{theorem}
\label{badlevelsC} Let $J$ be the functional \eqref{Jv} with the function $f$ of $4\pi$-critical growth satisfying (g0), (g1) and (g2).
Let $c\ge\frac12$ , $\beta=\beta(c)= {e^{8(c-1/2)}}$,  $a\in(0,1)$.
Then there exist sequences $s_k\to\infty$, $\psi_k\in C_{0;r}^1(B)$ , $\|\nabla
\psi_k\|_2^2\to 0$,
 such that the sequence $u_k=\delta_{s_k}\mu_{[a^\beta ,a]}+\psi_k$ satisfies
\begin{equation}
\label{crit1C}
J(u_k)\to c, J'(u_k)\to 0.
\end{equation}
 \end{theorem}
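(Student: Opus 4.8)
The plan is to follow the scheme of Theorem~\ref{badlevels}, replacing the finitely many concentration points by the whole interval $C=[a^\beta,a]$, and to reduce the criticality to the integral condition of Lemma~\ref{ap3}. First I would record the energy bookkeeping that fixes the level. Writing $C=C_+=[a^\beta,a]$, Proposition~\ref{lem:LM}(iii) gives
\[
\|\nabla\mu_C\|_2^2=\frac14\int_{a^\beta}^{a}\frac{\ud r}{r\rlog{r}}+1=\frac14\log\beta+1,
\]
the summand $1$ being the contribution of the interval $(a,1)$ touching the boundary (the interval $(0,a^\beta)$ carries a constant and contributes $0$). Since $\log\beta=8(c-\frac12)$ this equals $2c$, so $\frac12\|\nabla\mu_C\|_2^2=c$. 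Setting $\psi_k=\delta_{s_k}v_k$ and using that $\delta_{s_k}$ is unitary on $\sob$, one has $\|\nabla\psi_k\|_2=\|\nabla v_k\|_2$ and $\frac12\|\nabla u_k\|_2^2=\frac12\|\nabla\mu_C\|_2^2+o(1)$; combined with $\int_\Omega F(u_k)\to0$ from (g2), this yields $J(u_k)\to c$. Everything then reduces to producing $s_k\to\infty$ and radial $v_k\to0$ in $\sob$ satisfying \eqref{ap3b} with $w=\mu_C$, whence Lemma~\ref{ap3} gives $J'(u_k)\to0$.

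The decisive structural fact is that on $C$ the profile saturates the Moser barrier, $\mu_C(e^{-t})=\sqrt{\frac{1}{2\pi}t}$, so that
\[
4\pi\mu_C(e^{-t})^2-2t\equiv0,\qquad t\in[\rlog{a},\beta\rlog{a}].
\]
Consequently the exponential weight in \eqref{Ik} does not localize at a point, as it did in Theorem~\ref{badlevels}, but stays of order one across the whole interval, and the limiting distribution $-\Delta\mu_C$ splits into a genuine absolutely continuous part on the interior of $C$,
\[
-\Delta\mu_C=\frac{1}{4\sqrt{2\pi}}\,r^{-2}(\rlog{r})^{-3/2},
\]
together with two positive Dirac masses at $a^\beta$ and $a$, coming from the kinks of $\mu_C'$ there (of weights $\frac{1}{2\sqrt{2\pi}}(\rlog{a^\beta})^{-1/2}$ and $\frac{1}{2\sqrt{2\pi}}(\rlog{a})^{-1/2}$). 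Reproducing this mixed object is exactly where the construction must depart from the finite case.

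I would therefore build $v_k$ in three zones. On compact subsets of the interior of $C$, where the main exponent vanishes, I would solve the pointwise equation $e^{s_k\psi_k(t)}=\frac{1}{4\sqrt{2\pi}}(\rlog{r})^{-3/2}$ for $v_k$ by the implicit function theorem, using $g\in C^1$, (g0) and the consequence \eqref{f1}; this forces the integrand of \eqref{Ik} to converge to the density above, with $v_k,v_k'\to0$ uniformly. In two shrinking boundary layers at $a^\beta$ and $a$ I would instead tune $v_k$ so that $e^{s_k\psi_k}$ is boosted to order $s_k$ on a $t$-window of width $O(1/s_k)$; the rescaling $\tau=s_k(t-\alpha^2)$ then converts each layer integral into the prescribed atomic weight, exactly as the factor $M_js_k$ does via \eqref{psik2}, and such layers cost only $O((\log s_k)^2/s_k)\to0$ in gradient norm. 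Outside $C$ the main exponent is strictly negative, so after a smooth cutoff (and the same care near $r=1$ as in Theorem~\ref{badlevels}) the integrand is negligible. Assembling the three pieces with cutoffs produces a single radial $v_k\to0$ in $\sob$, and the weak statement \eqref{weaktow} follows by adding the three limiting contributions.

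The main obstacle lies entirely in the last step and is twofold. First, the interior (density) and boundary-layer (atom) prescriptions must be glued into one admissible correction: the layers have to be thin enough to disappear in the $H^1$ norm yet still carry the full Dirac weights, and the transition between the order-one and order-$s_k$ regimes of $e^{s_k\psi_k}$ must not spoil $v_k\to0$. Second, and unlike Theorem~\ref{badlevels}, the integrand no longer tends uniformly to zero off a finite set; it converges to a nontrivial density on $C$, so the passage from \eqref{weaktow} to the strong $L^2$ convergence \eqref{ap3b} required by Lemma~\ref{ap3} cannot be read off directly. I would establish it by showing that the primitives $\Phi_k(r)=\int_0^r(\cdots)\rho\,\ud\rho$ converge pointwise to $-r\mu_C'(r)$, which increases continuously across $C$ and jumps by the two atomic weights at the endpoints, and are dominated uniformly up to $r=1$, so that dominated convergence upgrades the convergence to $L^2(B)$.
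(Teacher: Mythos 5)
Your proposal is correct and follows essentially the same route as the paper's proof: reduction to Lemma~\ref{ap3}, an implicit-function-theorem solution of the pointwise equation $e^{s_k\psi_k}=\frac{1}{4\sqrt{2\pi}}(\rlog{r})^{-3/2}$ on the interior of $C$ (the paper's equation \eqref{inside}) to reproduce the absolutely continuous part of $-\Delta\mu_C$, the $Ms_k$-boosted solutions of \eqref{v} outside $C$ (with $M_1,M_2$ tuned to the atomic weights $p,q$) exactly as in Theorem~\ref{badlevels}, a cutoff gluing whose gradient cost vanishes, and the level computation $\frac12\|\nabla\mu_{[a^\beta,a]}\|_2^2=\frac12+\frac{\log\beta}{8}=c$ via (g2). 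Your explicit identification of the density $\frac{1}{4\sqrt{2\pi}}r^{-2}(\rlog{r})^{-3/2}$ and of the endpoint weights is consistent with \eqref{deltawC}, and your dominated-convergence argument for upgrading \eqref{weaktow} to \eqref{ap3b} fills in a verification the paper leaves to the reader.
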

\begin{proof}
1. Let us construct a sequence $v_k$ satisfying \eqref{weaktow}.
An elementary calculation of the right hand  side of \eqref{weaktow} with $C=[a^\beta,a]$ gives
\begin{equation}
\label{deltawC}
\int_0^1\mu_C'(r)\varphi'(r)r\ud r=p\varphi(a^\beta)+q\varphi(a)+
\int_{a^\beta}^a\frac{\mu_C(r)}{4r^2(\rlog{r})^2}\varphi(r)r\ud r,
\end{equation}
with some positive coefficients $p,q$ whose specific values are not important for the construction.
We construct first a sequence that satisfies \eqref{weaktow} with test functions $\varphi$ supported only on $[a^\beta,a]$.
Consider the equation
\begin{equation}
\begin{split}
\frac32 \log s_k-4\pi s_k\mu_C(r)^2+4\pi s_k(\mu_C(r)+v_k(r))^2+\log g(s_k^{1/2}\mu_C(r)+s_k^{1/2}v_k(r))\\
=-\log 4\sqrt{2\pi}-\frac32\log \rlog{r}, \;r\in(a^\beta,a).
\end{split}
\end{equation}
Expanding the square in the third term and dividing the equation by $8\pi s_k
\mu_C(r)$, we have, equivalently,
\begin{equation}
\label{inside}
\begin{split}
v_k(r)+\frac{1}{2\mu_C(r)}v_k(r)^2+\frac{\log
g(s_k^{1/2}\mu_C(r)+s_k^{1/2}v_k(r))}{8\pi s_k \mu_C(r)}\\
= -\frac{3}{16\pi \mu_C(r)}\frac{\log s_k}{s_k} -\frac{\log
4\sqrt{2\pi}+\frac32\log\rlog{r}}{8\pi s_k \mu_C(r)},\;r\in(a^\beta,a).
\end{split}
\end{equation}
For all $k$ large enough, by the implicit function theorem using (g0) and (g1), 
we have a unique solution $\tilde v_k$ that converges to zero uniformly on $C$, and so does its derivative.

2. Consider solutions $v_k^M$ of the equation \eqref{v} restricted to $r\in(0,1)\setminus C$, with the parameter $M\in\R$ to be determined at a later step. Comparing \eqref{v} and \eqref{inside}, we have
$\tilde v_k(r)\ge v_k^M(r)+\frac{\delta\log s_k}{s_k}$ at $r=a^\beta,a$,  with
some $\delta>0$. Let $\chi$ be a smooth function on $[0,\infty)$, which equals
$1$ on $[0,\frac12)$ and is supported in $[0,1)$, and let $\chi_0\in
C_0^\infty (0,\infty)$ be equal $1$ in a neighborhood of $[\rlog{a},
\beta\rlog{a}]$.
We define $v_k(e^{-t})$ as
\begin{equation}
\begin{cases}
\chi_0(t)[ \chi(s_k^{3/2}(\rlog{a}-t))\tilde v_k(e^{-t})+(1-\chi(s_k^{3/2}(\rlog{a})-t))v_k^{M_1}(e^{-t})], &t\in[0,\rlog{a}),
\\
 \tilde v_k(e^{-t}),&   t\in [\rlog{a}, \beta\rlog{a}],
 \\
 \chi_0(t)[\chi(s_k^{3/2}(t-\beta\rlog{a}))\tilde v_k(e^{-t})+(1-\chi(s_k^{3/2}(t-\beta\rlog{a})))v_k^{M_2}(e^{-t})], & t\in[\beta\rlog{a},\infty).
 \end{cases}
\end{equation}
We leave it to the reader to verify that $v_k$ satisfies \eqref{weaktow}
with the right hand side as in \eqref{deltawC}, once the values of $M_1$, $M_2$ are set to match the constants $p$, $q$, and, furtheremore, \eqref{ap3b}.
In order to show that $v_k\to 0$ in $H_0^1(B)$ we note that in the intervals $[\rlog{a}-s_k^{-1/2},\rlog{a}]$ and $[\beta\rlog{a},\beta\rlog{a}+s_k^{-1/2}]$,
\[
 |\frac{d}{dt}v_k(e^{-t})|^2\le s_k^{3}|\tilde v_k-v_k^M|^2+o(1)\le C(\log s_k)^2+o(1),
\]
which suffices to have a vanishing contribution from these intervals to the integral in $\|\nabla v_k\|_2^2$, while
on their complement $v_k'$ converges uniformly to zero. We set $\psi_k=\delta_{1/s_k}v_k$ which vanishes in $H_0^1(B)$, since so does $v_k$ and since each $\delta_{1/s_k}$ is an isometry.

3.  From (g2) it follows immediately that $\int F(u_k)\to 0$, and thus
\begin{equation}
 \begin{split}
J(u_k)=\frac12\|\nabla u_k\|_2^2+o(1)=\frac12\|\nabla \delta_{s_k}\mu_C\|_2^2+o(1)\\
\to  \frac12\|\nabla\mu_C\|_2^2=c.
 \end{split}
\end{equation}
An elementary explicit calculation yields $\|\nabla\mu_C\|_2^2=1+\frac{\log\beta}{4}$.
\end{proof}

\begin{remark}
Both results on existence of critical sequence with a given concentration
profile, Theorem~\ref{thm:critseq} and Theorem~\ref{badlevelsC}, can be easily
extended to the case, respectively, of a nodal profile with $|C|=0$, and of
a nodal profile when $C$ is a finite union of closed intervals, with only
elementary modifications of the proof. Furthermore, one can extend, in the nodal
setting, the gluing construction of Theorem~\ref{badlevelsC}
by the approximation reasoning of Theorem~\ref{thm:critseq} to obtain existence of a critical sequence of the form
$u_k=\delta_{1/s_k}\mu_{C_+,C_-}+\psi_k$ with $\|\nabla\psi_k\|_2\to 0$ at the
level $J=\frac12\|\nabla\mu_{C_+,C_-}\|_2^2$, for every Moser-Carleson-Chang
tower. We prefer, however, to defer such proof until we know of a new
application, interesting enough to provide due motivation.
\end{remark}

\section{Structure of Palais-Smale sequences}
We conclude the paper with a restriction of Theorem~ \ref{2cc-j} to critical sequences. Note that the case $c=\frac{m}{2}$ below reduces  verification of the Adimurthi-Struwe conjecture to the question if Moser functions are the only possible concentration profiles - which they are not.

\begin{theorem}
\label{thm:asymp}
Let $\Omega\subset\R^2$ be a bounded domain. Let $J$ be the functional \eqref{Jv} with $f$ of critical growth satisfying (g0), (g1) and (g2).
Let $u_k\in H_0^1(\Omega)$ be a bounded sequence such that
$J'(u_k)\to 0$ and $J(u_k)\to c$. Then the sequence $u_k$ has a renumbered subsequence
of the following form:

There exists an $m\in\N$, $m\le 2c$, sequences
$s_k^{(1)},\dots,s_k^{(m)}$ of positive numbers, convergent to zero for every $j=1,\dots,m$,
sequences $z_k^{(1)},\dots,z_k^{(m)}\in\bar\Omega$, $z_k^{(j)}\to z_j$, $j=1,\dots,m$, with $s_k^{(1)}=1$, $z_k^{(1)}=0$,
 and closed sets
$C_\pm^{(1)},\dots,C_\pm^{(m)}\in (0,1)$,
such that
\begin{equation}
\label{separates*}
z_p\neq z_q \text{ or } |\rlog{s_k^{(p)}}-\rlog{s_k^{(q)}}| \text{ whenever } p\neq q,
\end{equation}
\begin{equation}
\label{eq:ass}
u_k-\sum_{j=1}^m \delta_{s_k^{(j)}}\mu_{C_+^{(j)},C_-^{(j)}}(|\cdot-z_k^{(j)}|)\to 0 \text{ in } \exp L^2,
\end{equation}
and
\[
\|\nabla u_k\|_2^2\to\sum_j\|\nabla \mu_{C_+^{(j)},C_-^{(j)}}\|_2^2.
\]
If  $Z_j$ is the number of zeroes of $w^{(j)}=\mu_{C_+^{(j)},C_-^{(j)}}$, then $\sum_{j=1}^mZ_j<2c-2m$. In particular, if $c\le m$, all functions $w^{(j)}$ are sign definite.
Furthermore, if $\frac{m}{2}=c$, then for every $j=1,\dots,m$, $C^{(j)}=\{t_j\}$ for some $t_j\in (0,1)$, and $\mu_{C^{(j)}}$ is a Moser function $\mu_{t_j}$ and the convergence in \eqref{eq:ass} is in $H^1$-norm.

\end{theorem}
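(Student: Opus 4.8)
The plan is to specialize the profile decomposition of Theorem~\ref{2cc-j} to the Palais--Smale sequence $u_k$, reading off the list of bubbles from it and then exploiting the critical-point condition at two scales: globally, to quantize the total energy, and profile-by-profile, to recognize each bubble as a tower. First I would record the global energy identity. Testing $J'(u_k)\to 0$ against $u_k$ gives $\|\nabla u_k\|_2^2=\frac{1}{8\pi}\int_\Omega f(u_k)u_k\dx+o(1)$, and (g2), which forces $F(u_k)$ to be negligible against $f(u_k)u_k$ where $u_k$ is large while $F(u_k)\to 0$ pointwise on the concentration-free bulk, yields $\int_\Omega F(u_k)\dx\to 0$; hence $J(u_k)=\frac12\|\nabla u_k\|_2^2+o(1)$ and $\|\nabla u_k\|_2^2\to 2c$. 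By Theorem~\ref{lem:toy_pyramid} every concentrating profile $w^{(n)}$, $n\ge 2$, is a Moser--Carleson--Chang tower, the first term being the weak $H^1_0$-limit of $u_k$; and Proposition~\ref{lem:LM}(v) gives $\|\nabla w^{(n)}\|_2^2\ge 1$ for each tower. Combined with the Bessel-type bound \eqref{norms=j}, $\sum_n\|\nabla w^{(n)}\|_2^2\le\limsup\|\nabla u_k\|_2^2=2c$, this shows that at most $2c$ profiles are nonzero, so $m<\infty$ and $m\le 2c$.

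The heart of the matter is the sharp identity $\sum_{j=1}^m\|\nabla w^{(j)}\|_2^2=2c$, equivalently that the remainder $r_k:=u_k-\sum_j\delta_{s_k^{(j)}}\mu_{C_+^{(j)},C_-^{(j)}}(\abs{\cdot-z_k^{(j)}})$ loses all its gradient energy in the limit. The separation \eqref{separates=j} makes the bubbles, and with them $r_k$, asymptotically orthogonal in $H^1_0$, so that $\int_\Omega\nabla u_k\cdot\nabla r_k\dx=\|\nabla r_k\|_2^2+o(1)$; testing $J'(u_k)\to 0$ against the bounded sequence $r_k$ then gives $\|\nabla r_k\|_2^2=\frac{1}{8\pi}\int_\Omega f(u_k)r_k\dx+o(1)$, and the whole claim collapses to the single pairing $\int_\Omega f(u_k)r_k\dx\to 0$. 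The hard part will be exactly this pairing: at $4\pi$-critical growth $f(u_k)\sim e^{4\pi u_k^2}$ need not stay bounded in any space dual to $\exp L^2$, so the $\exp L^2$-smallness of $r_k$ supplied by \eqref{BBasymptotics=j} does not control the integral on its own. I would resolve it by localizing as in the proof of Theorem~\ref{lem:toy_pyramid}: the rescaled densities $j_k^{3/2}\rho^{2j_k-2}\frac{1}{8\pi}f(u_k)$ converge to $-\Delta w^{(j)}$, a distribution carried by the sets $C^{(j)}$ on which $\abs{w^{(j)}}=\sqrt{\frac{1}{2\pi}\logfrac}$. Away from arbitrarily small neighborhoods of the $C^{(j)}$ the exponent $4\pi u_k^2-2j_k\rlog{\rho}$ is bounded above, so $f(u_k)$ is tame and the $\exp L^2$-smallness of $r_k$ applies; on the neighborhoods the contribution is matched against $\Delta w^{(j)}$ and is checked to cancel, just as the constants $M_j$ are tuned in Theorems~\ref{badlevels} and~\ref{badlevelsC}.

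With $\sum_j\|\nabla w^{(j)}\|_2^2=2c$ established (and therefore $\|\nabla r_k\|_2\to 0$, so that \eqref{eq:ass} in fact holds in $H^1_0$), the remaining assertions are bookkeeping against this energy budget. Summing Proposition~\ref{lem:LM}(iv) over the finitely many profiles and inserting $\sum_j\|\nabla w^{(j)}\|_2^2=2c$ bounds $\sum_j Z_j$; since a sign-changing tower carries a term from $\mathcal A''$ strictly exceeding $1$, a profile with a zero is strictly costlier than a sign-definite one, so the budget $2c$ caps the number of admissible zeros and forces every profile to be sign definite once $c$ is small relative to $m$. For the balanced level $c=\frac m2$ the identity reads $\sum_{j=1}^m\|\nabla w^{(j)}\|_2^2=m$ with each summand $\ge 1$; hence every $\|\nabla w^{(j)}\|_2^2=1$, and the equality clause of Proposition~\ref{lem:LM}(v) pins each $C^{(j)}$ to a single point, so $w^{(j)}=\mu_{t_j}$ is a Moser function and the convergence is in $H^1_0$. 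Every ingredient beyond the localized pairing estimate of the preceding paragraph is furnished by Theorems~\ref{2cc-j} and~\ref{lem:toy_pyramid} and Proposition~\ref{lem:LM}, so that estimate --- controlling the critical nonlinearity against the $\exp L^2$-small remainder --- is where the real difficulty sits.
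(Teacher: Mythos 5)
Your skeleton is the paper's own: apply the profile decomposition of Theorem~\ref{2cc-j}, identify every concentrating profile as a Moser--Carleson--Chang tower via Theorem~\ref{lem:toy_pyramid}, use (g2) to get $\int_\Omega F(u_k)\dx\to 0$ so that $J(u_k)=\frac12\|\nabla u_k\|_2^2+o(1)$ and $\|\nabla u_k\|_2^2\to 2c$, and then do bookkeeping with Proposition~\ref{lem:LM}: the per-profile lower bound $\|\nabla w^{(j)}\|_2^2\ge 1$ of (v) together with the Bessel-type bound \eqref{norms=j} gives finiteness and $m\le 2c$; the zero-count bound comes from (iv); and at the level $c=\frac m2$ the squeeze $m\le\sum_j\|\nabla w^{(j)}\|_2^2\le 2c=m$ forces each norm to equal $1$, whence the equality clause of (v) pins each $C^{(j)}$ to a singleton, each profile is a Moser function, and norm convergence plus $\exp L^2$-convergence upgrades \eqref{eq:ass} to $H^1$-convergence. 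All of this is correct and coincides with the paper's argument.

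The divergence is your middle paragraph, and it contains a genuine gap. You propose to prove the sharp identity $\sum_j\|\nabla w^{(j)}\|_2^2=2c$ at \emph{every} level (hence $\|\nabla r_k\|_2\to 0$) by testing $J'(u_k)$ against the remainder and controlling the pairing $\int_\Omega f(u_k)r_k\dx$. As you concede, $\exp L^2$-smallness of $r_k$ does not pair against $e^{4\pi u_k^2}$, and your proposed fix --- localize near the sets where $|w^{(j)}|$ touches $\sqrt{\frac{1}{2\pi}\logfrac}$ and assert that the contribution there is ``matched against $\Delta w^{(j)}$ and is checked to cancel'' --- is a statement of intent, not an argument: the convergence \eqref{2.4} in the proof of Theorem~\ref{lem:toy_pyramid} is weak convergence tested against \emph{fixed} smooth functions, while $r_k$ is a varying, merely bounded sequence, and nothing in the paper's toolkit upgrades \eqref{2.4} to such pairings (the delicate constructions of Theorems~\ref{badlevels} and~\ref{badlevelsC}, which you cite as a model, go in the opposite direction: they build $v_k$ to \emph{create} such cancellation, they do not verify it for an arbitrary Palais--Smale sequence). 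A warning sign is that your claim, if proved, would give $H^1$-convergence in \eqref{eq:ass} at every level, rendering the theorem's final clause (which reserves $H^1$-convergence for $c=\frac m2$) vacuous. The paper itself sidesteps this entirely: its proof derives only the inequality $c\ge\frac12\sum_j\|\nabla w^{(j)}\|_2^2$ from \eqref{norms=j} and (g2) --- which suffices for $m\le 2c$, for the zero count, and for the quantized case, where equality is forced arithmetically rather than analytically --- and indeed the paper's proof, like yours, never actually establishes the displayed gradient-norm identity in the general case. So your plan is sound exactly where it tracks the paper; the pairing estimate you place at the heart of the proof is both unproved and avoidable.
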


\begin{proof}
 The statement follows immediately from application of Theorem~\ref{lem:toy_pyramid} to Theorem~\ref{2cc-j} and properties of the profiles $\mu_{C_+,C_-}$ from Proposition~\ref{lem:LM}.
 Note that from (g2) it follows that $\int F(u_k)\to 0$, so $c\ge\frac12\sum_j\|\nabla \mu_{C_+^{(j)},C_-^{(j)}}\|_2^2$.
 Then relation $\sum_{j=1}^mZ_j<2c-2m$ is immediate from Proposition~\ref{lem:LM}.
 If $c=m/2$ then, necessarily, each of the norms in the right hand side equals
$1$, each $\mu_{C_+^{(j)},C_-^{(j)}}$ is a Moser function, the inequality
becomes an equality, and the resulting convergence of $H^1$-norms in
\eqref{eq:ass}, $\|\nabla u_k\|_2^2\to\sum \|\nabla
\mu_{C_+^{(j)},C_-^{(j)}}\|_2^2$, together with convergence in $\exp L^2$
implies $H^1$-convergence.  \end{proof}


\end{document}